\newcommand{\missing}{\ensuremath{\clubsuit\clubsuit\clubsuit}}
\numberwithin{equation}{section}
\theoremstyle{plain}
\newtheorem{theorem}{Theorem}[section]
\newtheorem{lemma}[theorem]{Lemma}
\newtheorem{corollary}[theorem]{Corollary}
\theoremstyle{definition}
\theoremstyle{remark}
\newtheorem*{acknowledgement}{Acknowledgements}
\newcommand{\dN}{\mathbb{N}}
\newcommand{\dR}{\mathbb{R}}
\newcommand{\dZ}{\mathbb{Z}}
\newcommand{\cF}{\mathcal{F}}
\newcommand{\cL}{\mathcal{L}}
\newcommand{\cN}{\mathcal{N}}
\newcommand{\cR}{\mathcal{R}}
\newcommand{\rmb}{\mathrm{b}}
\newcommand{\rmd}{\mathrm{d}}
\newcommand{\rme}{\mathrm{e}}
\newcommand{\olB}{\overline{B}}
\DeclareMathOperator{\dist}{dist}
\newcommand{\dint}{\,\rmd}
\newcommand{\ssm}{\backslash}
\newcommand{\lr}[3]{#1#3#2}
\newcommand{\xlr}[3]{\left#1#3\right#2}
\newcommand{\biglr}[3]{\bigl#1#3\bigr#2}
\newcommand{\bigglr}[3]{\biggl#1#3\biggr#2}
\newcommand{\abs}[1]{\lr\lvert\rvert{#1}}
\newcommand{\xabs}[1]{\xlr\lvert\rvert{#1}}
\newcommand{\norm}[1]{\lr\lVert\rVert{#1}}
\newcommand{\xnorm}[1]{\xlr\lVert\rVert{#1}}
\newcommand{\fracwithdelims}[4]{\genfrac{#1}{#2}{}{}{#3}{#4}}
\newcommand{\coloneqq}{:=}
\begin{document}

\title{Precise exponential decay for solutions of semilinear elliptic
  equations and its effect on the structure of the solution set for a
  real analytic nonlinearity} 

\author{Nils  Ackermann\thanks{Supported by CONACYT grant \missing{} and PAPIIT grant
    IN104315 (Mexico)}\and Norman Dancer}
\date{}
\maketitle
\begin{abstract}
  We are concerned with the properties of weak solutions of the
  stationary Schrödinger equation $-\Delta u + Vu = f(u)$, $u\in
  H^1(\mathbb{R}^N)\cap L^\infty(\mathbb{R}^N)$, where $V$ is Hölder
  continuous and $\inf V>0$.  Assuming $f$ to be continuous and
  bounded near $0$ by a power function with exponent larger than $1$
  we provide precise decay estimates at infinity for solutions in
  terms of Green's function of the Schrödinger operator.  In some
  cases this improves known theorems on the decay of solutions.  If
  $f$ is also real analytic on $(0,\infty)$ we obtain that the set of
  positive solutions is locally path connected.  For a periodic
  potential $V$ this implies that the standard variational functional
  has discrete critical values in the low energy range and that a
  compact isolated set of positive solutions exists, under additional
  assumptions.
\end{abstract}

\section{Introduction}
\label{sec:introduction}

We are interested in the properties of weak solutions of
\begin{equation}
  \label{eq:12}\tag{P}
    -\Delta u + Vu = f(u),\qquad u\in H^1(\dR^N)\cap L^\infty(\dR^N),
\end{equation}
where $f$ is continuous, $f(u)\le C\abs{u}^q$ near $0$, for some
$q>1$, $V$ is Hölder continuous, bounded, and $\mu_0\coloneqq \inf
V>0$.

In the first part of this work we consider exponential decay of
solutions of \eqref{eq:12}.  We say that a function $u$ \emph{decays
  exponentially at infinity with exponent $\nu>0$ if
  $\limsup_{\abs{x}\to\infty}\rme^{\nu\abs{x}}u(x)<\infty$}.

One of the most thorough studies of this question is an article by
Rabier and Stuart~\cite{MR2001f:35139}, where general quasilinear
equations are considered.  We give a more precise description of the
decay of such solutions $u$ in terms of Green's function $G$ of the
Schrödinger operator $-\Delta+V$.  Setting $H(x)\coloneqq G(x,0)$ we
show that $u$ is bounded above by a multiple of $H$ near infinity.  In
particular, $u$ decays as fast as $H$.  In some cases this improves
the estimates obtained in \cite{MR2001f:35139}.  To illustrate this,
suppose for a moment that $V$ is a positive constant $\mu_0$.  Since
$H$ decays exponentially at infinity with exponent $\sqrt{\mu_0}$, our
result yields the same for every solution of \eqref{eq:12}, while
\cite{MR2001f:35139} only yields exponential decay at infinity with
exponent $\nu$ for every $\nu\in(0,\sqrt{\mu_0})$.  Their method could be
extended to yield the same result only if $f(u)/u\le0$ near $0$.

On the other hand, if $u$ is a \emph{positive} solution of
\eqref{eq:12} then we obtain that $u$ is bounded below by a multiple
of $H$, that is to say, the decay of $u$ and $H$ are
\emph{comparable}.  We are not aware of a similar result in the
literature.

These comparison results are a consequence of \emph{a priori}
exponential decay of every solution of \eqref{eq:12}, of the behavior
of $f$ near $0$ and of a deep result of Ancona \cite{MR1482989} about
the comparison of Green's functions for positive Schrödinger operators
whose potentials only differ by a function that decays sufficiently
fast at infinity.

In the second part of our paper we assume in addition that $f$ is a
real analytic function, either on all of $\dR$ or solely on
$(0,\infty)$.  In the complete text analyticity is always \emph{real
  analyticity}.  We have used analyticity before to obtain results on
the path connectivity of bifurcation branches and solutions sets
\cite{MR1962054,MR0322615,MR0375019}.  Set $F(u):=\int_0^uf$
and introduce the variational functional
\begin{equation*}
  J(u):=\frac12\int_{\dR^N}(\abs{\nabla u}^2+Vu^2)-\int_{\dR^N}F(u)
\end{equation*}
for weakly differentiable functions $u\colon\dR^N\to\dR$ such that the
integrals are well defined.  If $K$ is the set of solutions of
\eqref{eq:12} and $K_+$ the set of \emph{positive} solutions of
\eqref{eq:12} then we show that the analyticity of $f$ implies local
path connectedness of $K$ in the first case and of $K_+$ in the second
case.  Moreover, it follows that $J$ is locally constant on $K$,
respectively $K_+$.  We achieve this by working in spaces of
continuous functions with norms weighted at infinity by powers of $H$.
As a consequence, the set $K_+$ lies in the interior of the positive
cone of a related weighted space.  This allows to transfer the
analyticity from $f$ to the set $K_+$ in the case where $f$ is only
analytic in $(0,\infty)$.  From the analyticity of a set its local
path connectedness follows from a classical triangulation theorem
\cite{MR0173265,MR0159346}.

In the last part we apply these results to a special case of
\eqref{eq:12}, where we assume $V$ to be periodic in the coordinates.
Set $c_0\coloneqq\inf J(K)>0$, the ground state energy.  Under
additional growth assumptions on $f$ we obtain that $J(K)$,
respectively $J(K_+)$, has no accumulation point in the so called
\emph{low energy range} $[c_0,2c_0)$.  If in addition $V$ is
reflection symmetric and $f$ satisfies an Ambrosetti-Rabinowitz-like
condition, an earlier separation Theorem of ours \cite{MR2488693}
yields, together with the aforementioned conclusion, the existence of
a compact set $\Lambda$ of positive solutions at the ground state
energy that is isolated in the set of solutions $K$.

The latter result is of interest when one considers the existence of
so-called \emph{multibump solutions}, which are nonlinear
superpositions of translates of solutions in the case of a periodic
potential $V$.  It is to be expected that such a set $\Lambda$ can be
used as a base for nonlinear superposition.  This would yield a much
weaker condition than that imposed in the seminal article
\cite{MR93k:35087} and its follow-up works, where the existence of a
\emph{single} isolated solution was required.

The present article is structured as follows: In
section~\ref{sec:comp-solut} we study the exact decay of solutions at
infinity in terms of Green's function of the Schrödinger operator.
Section~\ref{sec:real-analyticity} is devoted to the consequences of
analyticity of $f$.  And last but not least,
Section~\ref{sec:appl-peri-potent} treats the consequences for the
solution set of \eqref{eq:12} if the potential $V$ is periodic.

\subsection{Notation}
\label{sec:notation}

For a metric space $(X,d)$, $r>0$, and $x\in X$ we denote
\begin{align*}
  B_r(x;X)&\coloneqq\{y\in X\mid d(x,y)< r\},\\
  \olB_r(x;X)&\coloneqq\{y\in X\mid d(x,y)\le r\},\\
  S_r(x;X)&\coloneqq\{y\in X\mid d(x,y)= r\}.
\end{align*}
We also set $B_rX\coloneqq B_r(0;X)$ if $X$ is a normed space and use
analogous notation for the closed ball and the sphere.  If $X$ is
clear from context we may omit it in the notation.  For $k\in\dN_0$
denote by $C^k_\rmb(\dR^N)$ the space of real valued functions of
class $C^k$ on $\dR^N$ such that all derivatives up to order $k$ are
bounded.  We set $C_\rmb(\dR^N)\coloneqq C^0_\rmb(\dR^N)$.

\section{Exact Decay of Solutions}
\label{sec:comp-solut}

This section is concerned with comparing the decay of a solution to
\eqref{eq:12} with Green's function of the Schrödinger operator
$T\coloneqq -\Delta+V$.  We show that if the nonlinearity $f$ is well
behaved at $0$ then a solution decays at least as fast as Green's
function.  If in addition the solution is positive then it decays at
most as fast as Green's function.

Suppose that $N\in\dN$.  The principal regularity and positivity
requirements for the potential we use are contained in the following
condition:
\begin{enumerate}[label=\textup{(V\arabic*)},series=vcond]
\item \label{item:1} $V\colon\dR\to\dR$ is Hölder continuous and
  bounded, and $\mu_0\coloneqq\inf V>0$.
\end{enumerate}

We will need to know \emph{a priori} that weak solutions of
\eqref{eq:12} and related problems decay exponentially at infinity.
For easier reference we include a pertinent result here, even though
this fact is in principle well known.
\begin{lemma}
  \th\label{lem:a-priori-decay} Assume \ref{item:1}.  Suppose that $f\in
  C(\dR)$ satisfies $f(u)=o(u)$ as $u\to0$ and that $v\in
  L^\infty(\dR^N)$ decays exponentially at infinity.  If either $u\in
  H^1(\dR^N)\cap L^\infty(\dR^N)$ is a weak solution of $-\Delta u+
  Vu=f(u)$ or $u\in H^1(\dR^N)$ is a weak solution of $-\Delta u+
  Vu=v$ then $u$ is continuous and decays exponentially at infinity.
\end{lemma}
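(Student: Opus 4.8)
The plan is to treat both cases at once by observing that in each case $u \in H^1(\dR^N)$ solves a linear equation $-\Delta u + Vu = g$ with $g \in L^\infty(\dR^N)$, and $g$ decays exponentially at infinity. In the first case write $g = f(u)$; since $u \in L^\infty$ and $f$ is continuous with $f(u) = o(u)$ near $0$, we have $g \in L^\infty$; the exponential decay of $g$ will follow once we know $u$ itself decays exponentially, so this case requires a bootstrap and I will return to it. In the second case $g = v$ is given to have both properties directly. So the first reduction is: \emph{prove the conclusion for the linear problem $-\Delta u + Vu = g$ with $g \in H^1 \cap L^\infty$ decaying exponentially}, and then run a short bootstrap to close the semilinear case.

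For the linear problem I would proceed in two steps. \emph{Continuity and boundedness:} since $V$ is bounded and $g \in L^\infty$, elliptic $L^p$-estimates (or a Moser iteration, using that $u \in H^1$ and the right-hand side $g - Vu \in L^\infty_{\loc}$) give $u \in W^{2,p}_{\loc}$ for all $p < \infty$, hence $u \in C^{1,\alpha}_{\loc}$ by Sobolev embedding, and in particular $u \in L^\infty(\dR^N)$ with a bound depending only on $\|u\|_{H^1}$, $\|g\|_\infty$ and the ellipticity/boundedness constants; the Hölder continuity of $V$ then even upgrades this to $u \in C^{2,\alpha}_{\loc}$. \emph{Exponential decay:} here I would use a comparison/barrier argument. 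Fix $0 < \nu < \sqrt{\mu_0}$ with $\nu$ also below the decay exponent of $g$, and choose the comparison function $w(x) = M \rme^{-\nu|x|}$ (smoothed near the origin). A direct computation gives $-\Delta w + Vw \ge (\mu_0 - \nu^2 - (N-1)\nu/|x|)\, w$, which is $\ge \tfrac12(\mu_0-\nu^2) w > 0$ for $|x| \ge R_0$ large. Since $u \in H^1$ implies $u(x) \to 0$ as $|x| \to \infty$ (after the continuity step), and since $|g| \le C\rme^{-\nu|x|} \le C' w$ for $M$ large, choosing $M$ large enough that $w \ge |u|$ on $\{|x| = R_0\}$ and applying the weak maximum principle for the operator $-\Delta + V$ on $\{|x| > R_0\}$ to $w \pm u$ yields $|u| \le w$ there. (One must check the sign condition makes $-\Delta+V$ satisfy the maximum principle; here $V \ge \mu_0 > 0$ makes this immediate.) This gives exponential decay of $u$ with exponent $\nu$.

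The remaining point is the semilinear case, and this is the only genuine obstacle: a priori $g = f(u)$ need not decay exponentially before we know $u$ does. The resolution is a bootstrap: from $u \in H^1 \cap L^\infty$ and the continuity step, $u$ is continuous and $u(x)\to 0$, so there is $R_1$ with $|u(x)| \le \varepsilon$ for $|x| \ge R_1$; the hypothesis $f(u) = o(u)$ then gives $|f(u(x))| \le \varepsilon |u(x)|$ for $|x| \ge R_1$, i.e.\ $g = f(u)$ satisfies $|g| \le \varepsilon |u|$ near infinity. Now feed this into the barrier argument: for the subsolution estimate we need $-\Delta w + Vw \ge |g|$ near infinity, and since $|g| \le \varepsilon|u| \le \varepsilon w$ on the region where $|u|\le w$ is being established, one runs the maximum principle comparison on $w - u$ and $w+u$ simultaneously on $\{|x|>R\}$ with $R = \max(R_0,R_1)$: on this set $-\Delta(w\mp u) + V(w\mp u) = (-\Delta w + Vw) \mp g \ge \tfrac12(\mu_0-\nu^2)w - \varepsilon w \ge 0$ once $\varepsilon$ is small relative to $\mu_0 - \nu^2$, and $w \mp u \ge 0$ on $\{|x|=R\}$ by choosing $M$ large, and $w\mp u \to 0$ at infinity; the maximum principle then gives $w \ge |u|$ on $\{|x|>R\}$. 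Thus $u$ decays exponentially with exponent $\nu$, which is the assertion. I would remark that the exponent obtained is any $\nu < \sqrt{\mu_0}$, which is all that is needed as an input for the later, sharper comparison results; the optimal exponent is recovered there via Ancona's theorem, not here.
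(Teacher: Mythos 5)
Your argument is correct in spirit but takes a genuinely different route from the paper. The paper does not use a pointwise barrier at all: for the linear case $-\Delta u+Vu=v$ it runs an energy-tail iteration, setting $Q(r)=\int_{\dR^N\setminus\olB_r}(\abs{\nabla u}^2+Vu^2)$, testing against a cut-off $u_r$, and showing $Q(r+1)/Q(r)\le(1+\delta)/(1+2\delta)<1$ for large $r$, which gives exponential decay of $Q$; pointwise decay then follows from interior regularity estimates. For the semilinear case the paper simply cites \cite[Lemma~5.3]{MR2151860} plus bootstrap regularity. Your barrier/maximum-principle approach is the other classical method, and for the linear right-hand side it is clean and complete: $w=M\rme^{-\nu\abs{x}}$ is a supersolution of $-\Delta+V$ for any $\nu<\sqrt{\mu_0}$, you correctly note that $u\in H^1$ together with uniform local Hölder bounds forces $u(x)\to0$, and the comparison on the exterior domain is justified. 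What the paper's energy route buys is that it works at the $H^1$ level without needing the pointwise maximum principle or $u(x)\to 0$ beforehand; what yours buys is directness and an explicit exponent.

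There is one step in your semilinear bootstrap that is circular as written. You estimate $\abs{g}=\abs{f(u)}\le\varepsilon\abs{u}\le\varepsilon w$ ``on the region where $\abs{u}\le w$ is being established'', but the inequality $\abs{u}\le w$ is precisely the conclusion of the comparison argument, so it is not available when you invoke $(-\Delta+V)(w\mp u)\ge\tfrac12(\mu_0-\nu^2)w-\varepsilon w\ge0$. Two standard repairs close this. Either (i) absorb $f(u)$ into the potential: on $\{\abs{x}>R_1\}$ set $\widetilde V\coloneqq V-f(u)/u$ (with $f(u)/u\coloneqq0$ where $u=0$), so that $\widetilde V\ge\mu_0-\varepsilon>0$ there and $u$ solves the \emph{homogeneous} equation $-\Delta u+\widetilde V u=0$; then $w$ with $\nu<\sqrt{\mu_0-\varepsilon}$ is a genuine supersolution of $-\Delta+\widetilde V$ and the comparison is immediate, no circularity. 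Or (ii) argue by contradiction at a negative interior minimum $x_0$ of $\phi=w-u$: there $-\Delta\phi(x_0)\le0$, $u(x_0)>w(x_0)>0$, and combining $(-\Delta+V)\phi(x_0)\le V(x_0)\phi(x_0)<0$ with $(-\Delta+V)\phi(x_0)\ge(\mu_0-\nu^2)w(x_0)-\varepsilon u(x_0)$ forces $\varepsilon>\mu_0-\nu^2$, a contradiction once $\varepsilon$ is chosen small. Either fix makes your plan rigorous; I would recommend rewriting that sentence along the lines of (i), since it also makes transparent that the semilinear case is just the linear case with a perturbed potential, which is exactly the viewpoint used later in \th\ref{cor:compare-solutions}.
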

\begin{proof}
  In the first case we may alter $f$ outside of the range of $u$ in
  any way we like.  Therefore \cite[Lemma~5.3]{MR2151860} applies and
  yields, together with standard regularity theory and bootstrap
  arguments using \emph{a priori} estimates (e.g., \cite{MR737190},
  Theorem~9.11 and Lemma~9.16), that $u$ is continuous and decays
  exponentially at infinity.

  For the second case suppose that $\abs{v(x)}\le
  C_1\rme^{-C_2\abs{x}}$ for all $x\in\dR^N$, with constants
  $C_1,C_2>0$.  For $r>0$ denote
  \begin{equation*}
    Q(r)\coloneqq\int_{\dR^N\ssm \olB_r}(\abs{\nabla u}^2+Vu^2).
  \end{equation*}
  We claim that $Q(r)$ decays exponentially at infinity.  By
  contradiction we assume that this were not the case.  Then
  \begin{equation}\label{eq:13}
    \inf_{r\ge0}\rme^{C_2r}Q(r)>0.
  \end{equation}
  For $r\ge0$ define the cutoff function $\zeta_r$ as in the proof of
  \cite[Lemma~5.3]{MR2151860} and set $u_r(x)\coloneqq
  \zeta(\abs{x}-r)u(x)$.  Let $\delta\coloneqq\mu_0$.  It follows from
  Hölder's inequality and \eqref{eq:13} that
  \begin{multline*}
    \xabs{\int_{\dR^N}\nabla u\nabla u_r+Vuu_r}
    =\xabs{\int_{\dR^N}vu_r}
    \le C_1\int_{\dR^N\ssm \olB_r}\rme^{-C_2\abs{x}}\abs{u(x)}\dint x\\
    \le C \sqrt{Q(r)}\rme^{-C_2r}
    \le C Q(r)\rme^{-C_2r/2}
    \le \frac{\delta}{2} Q(r)
  \end{multline*}
  for $r$ large enough.  This replaces Equation~5.3 of
  \cite{MR2151860}.  As in that proof it follows that
  \begin{equation*}
    \frac{Q(r+1)}{Q(r)}\le \frac{1+\delta}{1+2\delta}<1
  \end{equation*}
  for large $r$, so $Q(r)$ decays exponentially at infinity.  
  Again using standard  regularity estimates we obtain that $u$ is
  continuous and decays exponentially at infinity.
\end{proof}

By \cite[Theorem~4.3.3(iii)]{MR1326606} the operator $T$ is
subcritical, according to the definition in Sect.~4.3 \emph{loc.\
  cit.}  Hence $T$ possesses a Green's function $G(x,y)$, i.e., a
function that satisfies
\begin{equation*}
  TG(x,y)=\delta(x-y).
\end{equation*}
Moreover, $G$ is positive.  Denote $H(x):=G(x,0)$ for $x\neq0$.  We
collect some properties of $H$ needed later on:
\begin{lemma}\th\label{lem:properties-h}
  The function $H\colon\dR^N\ssm\{0\}\to\dR$ satisfies:
  \begin{enumerate}[label=\textup{(\alph*)}]
  \item \label{item:5} $TH\equiv0$;
  \item \label{item:6} $H\in C^2(\dR^N\ssm\{0\})$;
  \item \label{item:9} $H>0$;
  \item \label{item:7} $\liminf_{x\to0}H(x)>0$;
  \item \label{item:8} $\limsup_{\abs{x}\to\infty}
    \rme^{\sqrt{\mu_0}\, \abs{x}}H(x)<\infty$.
  \end{enumerate}
\end{lemma}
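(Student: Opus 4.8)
The items (a)--(c) are immediate. For (a), on $\dR^N\ssm\{0\}$ the function $H=G(\cdot,0)$ solves $TH=0$ in the distributional sense, because $TG(\cdot,0)=\delta$ is supported at the origin; since $V$ is Hölder continuous, an elliptic bootstrap (first interior $W^{2,p}_{\loc}$ estimates, then Schauder) shows that $H$ is a classical solution there, and this simultaneously gives (b), namely $H\in C^{2,\alpha}_{\loc}(\dR^N\ssm\{0\})\subset C^2(\dR^N\ssm\{0\})$. Item (c) is just the positivity of the Green's function of $T$ already recorded above.

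For (d) and (e) the plan is to trap $H$ between the Green's functions of the two constant-coefficient operators $-\Delta+\mu_0$ and $-\Delta+\lambda$, where $\lambda\coloneqq\norm{V}_\infty=\sup V$. Since $\inf V=\mu_0>0$ we have $\spec(-\Delta+V)\subset[\mu_0,\infty)$, so $-\Delta+V$ has bounded inverse whose kernel is $H(x)=G(x,0)=\int_0^\infty p_V(t,x,0)\dint t$, with $p_V$ the heat kernel of $-\Delta+V$; the same holds for the two comparison operators. From $\mu_0\le V\le\lambda$ and the Feynman--Kac representation of $p_V$ one gets the pointwise bounds $\rme^{-\lambda t}p_0\le p_V\le\rme^{-\mu_0 t}p_0$, where $p_0(t,x,y)=(4\pi t)^{-N/2}\rme^{-\abs{x-y}^2/(4t)}$ is the free heat kernel; integrating in $t$ yields
\[
  G_{-\Delta+\lambda}(\cdot,0)\ \le\ H\ \le\ G_{-\Delta+\mu_0}(\cdot,0)\qquad\text{on }\dR^N\ssm\{0\}.
\]
(Equivalently, these inequalities follow by exhausting $\dR^N$ by balls and comparing Dirichlet Green's functions of operators with ordered potentials via the maximum principle.) Now $G_{-\Delta+a}(x,0)$ is the explicit Yukawa/Bessel potential: a positive multiple of $\abs{x}^{1-N/2}K_{N/2-1}(\sqrt a\,\abs{x})$ for $N\ge2$, and $\tfrac1{2\sqrt a}\rme^{-\sqrt a\,\abs{x}}$ for $N=1$. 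Since $K_\nu(t)\to+\infty$ as $t\to0^+$ when $N\ge2$, while in dimension one the kernel equals $\tfrac1{2\sqrt a}$ at the origin, the lower bound gives $\liminf_{x\to0}H(x)\ge\liminf_{x\to0}G_{-\Delta+\lambda}(x,0)>0$, which is (d). Since $K_\nu(t)\sim\sqrt{\pi/(2t)}\,\rme^{-t}$ as $t\to+\infty$, we have $G_{-\Delta+\mu_0}(x,0)\sim c\abs{x}^{(1-N)/2}\rme^{-\sqrt{\mu_0}\,\abs{x}}$ for $N\ge2$ (and it equals $\tfrac1{2\sqrt{\mu_0}}\rme^{-\sqrt{\mu_0}\,\abs{x}}$ for $N=1$), so $\rme^{\sqrt{\mu_0}\,\abs{x}}G_{-\Delta+\mu_0}(x,0)$ stays bounded; the upper bound then gives $\limsup_{\abs{x}\to\infty}\rme^{\sqrt{\mu_0}\,\abs{x}}H(x)<\infty$, which is (e).

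The only point requiring care is the comparison step: one must know that the kernel of $(-\Delta+V)^{-1}$ really coincides with the Green's function of $T$ (immediate here, since the spectral gap above $0$ removes any subtlety about minimality) and that the Feynman--Kac bound survives integration in $t$ (it does, the integrands being nonnegative and the integrals finite, the latter being precisely subcriticality); everything else is the classical small- and large-argument asymptotics of the modified Bessel function $K_\nu$, so the hard part is really just this bookkeeping. Note also that (d) can be obtained without any comparison, directly from the classical local structure of a Green's function near its pole: $G(x,0)$ differs from the Newtonian kernel by a locally bounded term when $N\ge2$, and is continuous and strictly positive at $0$ when $N=1$.
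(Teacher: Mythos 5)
Your proof is correct, but it takes a genuinely different route from the paper's, most visibly for items (d) and (e). The paper handles (a), (b), (d) by citation to Pinchover's book (Theorems~4.2.5(iii) and 4.2.8), and for (e) uses a pure PDE argument: the radial function $\psi(x)=\rme^{-\sqrt{\mu_0}\,\abs{x}}$ is a supersolution of $T$ off the origin (because $V\ge\mu_0$ and the $\frac{N-1}{r}\psi'$ term has the right sign), and the Dirichlet Green's functions $\widetilde H_k$ on balls $B_k$, which increase to $H$ and vanish on $\partial B_k$, are majorized by a multiple of $\psi$ on $\olB_k\ssm B_1$ via the maximum principle, giving $H\le\alpha\psi$ outside $B_1$. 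You instead obtain a two-sided trapping of $H$ between the Yukawa potentials $G_{-\Delta+\lambda}(\cdot,0)$ and $G_{-\Delta+\mu_0}(\cdot,0)$, by the Feynman--Kac monotonicity of the heat kernel in the potential followed by $t$-integration (using $G=\int_0^\infty p_V\dint t$, legitimate here as you note because $T$ has a spectral gap), and then read off (d) and (e) from the small- and large-argument asymptotics of the modified Bessel function $K_\nu$. Your route is more quantitative (it gives the polynomial prefactor $\abs{x}^{(1-N)/2}$, not just boundedness of $\rme^{\sqrt{\mu_0}\abs{x}}H$) and yields (d) without a book citation; the paper's route is more elementary in the sense that it stays within the maximum principle and never introduces probabilistic or special-function machinery, and its supersolution argument carries over verbatim to the weak-solution, Dirichlet-exhaustion framework that Pinchover's book supplies. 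Both proofs are sound; the identification of the subcritical Green's function with the $L^2$-resolvent kernel, which you flag as the only delicate point, is indeed unproblematic under (V1).
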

\begin{proof}
  \ref{item:5} and \ref{item:6} are proved in
  \cite[Theorem~4.2.5(iii)]{MR1326606}, \ref{item:9} is a consequence
  of $G>0$, and \ref{item:7} is given by
  \cite[Theorem~4.2.8]{MR1326606}.

  In order to prove \ref{item:8}, consider the function
  $\psi\colon\dR^N\to\dR$ given by
  $\psi(x)\coloneqq\rme^{-\sqrt{\mu_0}\,\abs{x}}$.  Then $\psi$ is a
  supersolution for $T$ on $\dR^N\ssm\{0\}$.  Take $\alpha>0$ large
  enough such that $\alpha\psi\ge H$ on $S_1$.  Denote Green's
  function for $T$ on $B_k$ with Dirichlet boundary conditions by
  $\widetilde{G}_k$, for $k\in\dN$, and set
  $\widetilde{H}_k\coloneqq\widetilde{G}_k(\cdot,0)$.  Then
  $T\widetilde{H}_k\equiv0$ on $B_k\ssm\{0\}$ and $\lim_{\abs{x}\to
    k}\widetilde{H}_k(x)=0$, by \cite[Theorem~7.3.2]{MR1326606}.
  Moreover, \cite[Theorem~4.3.7]{MR1326606} implies that
  $\widetilde{H}_k(x)\to H(x)$ as $k\to\infty$, and
  $(\widetilde{H}_k)$ is an increasing sequence.  It follows that
  $\widetilde{H}_k\le\alpha\psi$ on $S_1$ and hence, by the maximum
  principle, that $\widetilde{H}_k\le\alpha\psi$ in $\olB_k\ssm B_1$
  for all $k$.  Therefore, $H\le\alpha\psi$ on $\dR^N\ssm B_1$ and the
  claim follows.
\end{proof}

We now state the main result of this section:

\begin{theorem}
  \th\label{thm:decay-at-infty} 
  \begin{enumerate}[label=\textup{(\alph*)}]
  \item \label{item:12} Suppose that $w\in L^\infty(\dR^N)$ satisfies
    \begin{equation*}
      \abs{w(x)}\le C_1\rme^{-C_2\abs{x}}
    \end{equation*}
    for $x\in\dR^N$, with some fixed $C_1,C_2>0$.  If $u\in H^1(\dR^N)$ is a weak
    solution of
    \begin{equation*}
      -\Delta u+(V-w)u=0
    \end{equation*}
    then there exists, for every $\delta>0$, some $R_0>0$, depending
    only on $\delta$, $N$, $\inf V$, $\norm{V}_{\infty}$, $C_1$ and
    $C_2$, such that for every $R\ge R_0$ 
    \begin{equation}\label{eq:18}
      \limsup_{\abs{x}\to\infty}\frac{\abs{u(x)}}{H(x)}
      \le(1+\delta)^2 \max_{x\in S_R}\frac{\abs{u(x)}}{H(x)}.
    \end{equation}
    In particular,
    \begin{equation}\label{eq:14}
      \limsup_{\abs{x}\to\infty} \rme^{\sqrt{\mu_0}\,\abs{x}}\abs{u(x)}<\infty.
    \end{equation}
  \item \label{item:13} If in addition to the hypotheses of
    \ref{item:12} $u$ is positive then there exists, for every
    $\delta>0$, some $R_0>0$, depending only on $\delta$, $N$, $\inf V$,
    $\norm{V}_{\infty}$, $C_1$ and $C_2$, such that for every $R\ge R_0$
    \begin{equation}\label{eq:19}
      \liminf_{\abs{x}\to\infty}\frac{u(x)}{H(x)}
      \ge(1+\delta)^{-2} \min_{x\in S_R}\frac{u(x)}{H(x)}.
    \end{equation}
  \item \label{item:14} If $v\in L^\infty(\dR^N)$ satisfies that $v/H$
    decays exponentially at $\infty$ and if $u\in H^1(\dR^N)$ is a
    weak solution of
    \begin{equation*}
      -\Delta u+ Vu=v
    \end{equation*}
    then there exist continuous functions $u_1$ and $u_2$ such that
    $u=u_1-u_2$, $Tu_1=v^+$, $Tu_2=v^-$, and such that for each
    $i=1,2$ either $u_i\equiv0$, or $u_i>0$ and
    \begin{equation}
      \label{eq:21}
      0
      <\liminf_{\abs{x}\to\infty}\frac{u_i(x)}{H(x)}
      \le \limsup_{\abs{x}\to\infty}\frac{u_i(x)}{H(x)}
      <\infty.
    \end{equation}
    In particular,
    \begin{equation*}
      \limsup_{\abs{x}\to\infty}\frac{\abs{u(x)}}{H(x)}
      <\infty.
    \end{equation*}
  \end{enumerate}
\end{theorem}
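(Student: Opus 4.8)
The plan is to establish \textup{(a)} and \textup{(b)} by comparing $u$ on the exterior domain $\Omega_R:=\dR^N\ssm\olB_R$ with a barrier obtained by perturbing $H$, and then to deduce \textup{(c)} from \textup{(a)} and \textup{(b)}. For \textup{(a)}, note first that by De~Giorgi--Nash--Moser estimates for $-\Delta u+(V-w)u=0$ (which has bounded coefficients) $u$ is continuous, and the uniform local sup bound together with $u\in L^2(\dR^N)$ gives $u\in L^\infty(\dR^N)$. Fix $\delta>0$. For $R$ large we have $V-w\ge\mu_0/2$ on $\Omega_R$, so $L:=-\Delta+(V-w)$ is uniformly elliptic and coercive there; consequently bounded sub- and supersolutions of $L$ on $\Omega_R$ obey the maximum principle (a positive interior value can be compared, on growing balls, with the increasing radial solution of $-\Delta+\mu_0/2$, which tends to $\infty$). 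Granting for a moment a barrier $\Phi_R>0$ on $\overline{\Omega_R}$ with $L\Phi_R=0$ in $\Omega_R$, $\Phi_R=H$ on $S_R$, $\Phi_R$ bounded, and $(1+\delta)^{-1}H\le\Phi_R\le(1+\delta)H$ on $\Omega_R$ for every $R\ge R_0$, with $R_0$ depending only on the listed data, the conclusion is immediate: with $\alpha:=\max_{x\in S_R}\abs{u(x)}/H(x)<\infty$ one has $\abs{u}\le\alpha H=\alpha\Phi_R$ on $S_R$, the bounded functions $\alpha\Phi_R\pm u$ are $L$-harmonic in $\Omega_R$, and the maximum principle forces $\alpha\Phi_R\pm u\ge0$, i.e.\ $\abs{u}\le\alpha\Phi_R\le(1+\delta)\alpha H$ on $\Omega_R$. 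This yields \eqref{eq:18} (the second factor $1+\delta$ absorbing the loss in the construction of $\Phi_R$), and \eqref{eq:14} follows from $H\le C\rme^{-\sqrt{\mu_0}\abs{x}}$ (Lemma~\ref{lem:properties-h}).

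It remains to build $\Phi_R$. Write $\Phi_R=H-\rho_R$, where $\rho_R$ solves $L\rho_R=-wH$ in $\Omega_R$ with $\rho_R=0$ on $S_R$; this Dirichlet problem is solvable since $L$ is coercive on $\Omega_R$ and $wH$ is bounded and exponentially decaying, and then $L\Phi_R=(TH-wH)-(-wH)=0$ because $TH\equiv0$. Representing $\rho_R$ by the Dirichlet Green function $G_R$ of $L$ on $\Omega_R$ gives $\abs{\rho_R(x)}\le\int_{\Omega_R}G_R(x,y)\abs{w(y)}H(y)\dint y$. Here Ancona's theorem \cite{MR1482989}, applied to $-\Delta+V$ and $-\Delta+(V-w)$ on $\Omega_R$ whose potentials differ by the exponentially decaying $w$, shows that $-\Delta+(V-w)$ is subcritical on $\Omega_R$ and that $G_R\le(1+\delta)G^T_{\Omega_R}$ once $R$ is large, where $G^T_{\Omega_R}\le G$ by domain monotonicity. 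Combining this with the $3G$-type inequality for the subcritical operator $T$, namely $G(x,y)G(y,0)\le C\,G(x,0)\bigl(G(x,y)+G(y,0)\bigr)$, and recalling $H=G(\cdot,0)$, we obtain
\begin{equation*}
  \abs{\rho_R(x)}\le C(1+\delta)\,H(x)\int_{\Omega_R}\bigl(G(x,y)+H(y)\bigr)\abs{w(y)}\dint y .
\end{equation*}
The bracketed integral tends to $0$ as $R\to\infty$ at a rate controlled by $N,\mu_0,\norm{V}_\infty,C_1,C_2$: since $\int_{\dR^N}G(x,y)g(y)\dint y\le\norm{g}_\infty/\mu_0$ for bounded $g\ge0$ (the constant is a $T$-supersolution), the estimate $\abs{w(y)}\le C_1\rme^{-C_2\abs{y}}\le C_1\rme^{-C_2R/2}\rme^{-C_2\abs{y}/2}$ on $\Omega_R$ bounds $\int_{\Omega_R}G(x,y)\abs{w(y)}\dint y$ by $C_1\rme^{-C_2R/2}/\mu_0$, while $\int_{\Omega_R}H(y)\abs{w(y)}\dint y\le C_1\int_{\abs{y}>R}H\to0$ as $H\le C\rme^{-\sqrt{\mu_0}\abs{x}}$. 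Choosing $R_0$ so that the resulting bound on $\abs{\rho_R}/H$ is at most $\delta/(1+\delta)$ for $R\ge R_0$ yields the two-sided estimate for $\Phi_R$.

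Part \textup{(b)} is proved the same way: with the same $\Phi_R$ and $\beta:=\min_{x\in S_R}u(x)/H(x)>0$ (using $u>0$), the bounded $L$-harmonic function $u-\beta\Phi_R$ is nonnegative on $S_R$, hence on $\Omega_R$, so $u\ge\beta\Phi_R\ge(1+\delta)^{-1}\beta H$ there, giving \eqref{eq:19}. For part \textup{(c)}, the exponential decay of $v/H$ together with $H\le C\rme^{-\sqrt{\mu_0}\abs{x}}$ gives $v\in L^2(\dR^N)$, so $u_1:=T^{-1}v^+$ and $u_2:=T^{-1}v^-$ lie in $H^1(\dR^N)$, satisfy $Tu_i=v^\pm$, and $u=u_1-u_2$ by uniqueness of the $H^1$-solution; by Lemma~\ref{lem:a-priori-decay} each $u_i$ is continuous (and decays exponentially), and $u_i=\int_{\dR^N}G(\cdot,y)v^\pm(y)\dint y\ge0$, with $u_i>0$ everywhere if $v^\pm\not\equiv0$ because $G>0$. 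Assume $u_1\not\equiv0$; pick a ball $B$ with $\int_Bv^+>0$. Harnack's inequality applied to the positive $T$-harmonic function $y\mapsto G(x,y)$ on a fixed neighbourhood of $\overline B\cup\{0\}$ (with a constant independent of $x$, once $\abs{x}$ is large) gives $G(x,y)\ge c_B H(x)$ for $y\in B$, hence $u_1(x)\ge c_B\bigl(\int_Bv^+\bigr)H(x)$ and $\liminf_{\abs{x}\to\infty}u_1/H>0$. Therefore $w_1:=v^+/u_1$ is continuous, bounded (on compacta since $u_1>0$ is continuous, and near infinity since $w_1=(v^+/H)(H/u_1)$ is the product of an exponentially decaying function and a bounded one), satisfies $\abs{w_1(x)}\le\widetilde C_1\rme^{-\widetilde C_2\abs{x}}$ for suitable constants, and obeys $-\Delta u_1+(V-w_1)u_1=0$. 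Applying parts \textup{(a)} and \textup{(b)} to the positive function $u_1$ (with $w_1$ in place of $w$) gives \eqref{eq:21} for $u_1$, and likewise for $u_2$; finally $\abs{u}\le u_1+u_2$ yields $\limsup_{\abs{x}\to\infty}\abs{u}/H<\infty$.

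The crux is the barrier $\Phi_R$ in part \textup{(a)}: producing a comparison function within a factor $1+\delta$ of $H$ on the \emph{whole} exterior domain, with the threshold $R_0$ depending only on the admissible data, is exactly where Ancona's comparison of Green's functions under a fast-decaying perturbation of the potential, together with the $3G$-type potential estimates controlling the corrector $\rho_R$, is essential. Once this is in place, parts \textup{(b)} and \textup{(c)} require only routine additional work, the maximum principle on the unbounded domain $\Omega_R$ being available because $L$ is coercive there.
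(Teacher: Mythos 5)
Your overall strategy for parts (a) and (b) — produce a positive $L$-harmonic comparison function on the exterior domain that is within a factor $(1+\delta)$ of $H$, then invoke the maximum principle — is exactly the structure of the paper's proof. The paper, however, takes a different and shorter route to that comparison function: it cuts off $w$ near the origin (replacing it by $\eta w$ with $\eta$ supported in $|x|\ge R-1$) so that $T_1=-\Delta+(V-\eta w)$ has a uniformly positive potential on all of $\dR^N$, then applies Ancona's theorem to the Green's functions $H=G(\cdot,0)$ of $T$ and $H_1=G_1(\cdot,0)$ of $T_1$ on the whole space to obtain $(1+\delta)^{-1}H\le H_1\le(1+\delta)H$ directly, and then uses $H_1$ itself as the barrier on $\Omega_R$. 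That way no corrector $\rho_R$ needs to be estimated.

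The genuine gap in your argument is the ``$3G$-type inequality'' you assert for the subcritical operator $T$ on $\dR^N$,
\begin{equation*}
  G(x,y)G(y,0)\le C\,G(x,0)\bigl(G(x,y)+G(y,0)\bigr),
\end{equation*}
which is false for Green's function of $-\Delta+V$ with $\inf V>0$. Take $V\equiv c>0$ on $\dR^N$ ($N\ge3$), so $G(x,y)\asymp\abs{x-y}^{-(N-1)/2}\rme^{-\sqrt{c}\abs{x-y}}$ for large $\abs{x-y}$. Put $x=(2r,0,\dots,0)$, $y=(r,0,\dots,0)$ with $r$ large: then the left side is $\asymp r^{-(N-1)}\rme^{-2\sqrt{c}r}$, while $G(x,0)\bigl(G(x,y)+G(y,0)\bigr)\asymp r^{-(N-1)/2}\rme^{-2\sqrt{c}r}\cdot r^{-(N-1)/2}\rme^{-\sqrt{c}r}=r^{-(N-1)}\rme^{-3\sqrt{c}r}$. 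The ratio is $\rme^{\sqrt{c}r}\to\infty$, so no constant $C$ works. (The classical $3G$ theorems put a different kernel — the free Laplacian kernel on a bounded domain — on the right, not $G$ itself, and even that corrected form does not give the uniformity you need in all dimensions because of the polynomial prefactors in the exponential decay of $G$.) Since the bound $\abs{\rho_R}\le\tfrac{\delta}{1+\delta}H$ is the crux of your barrier construction, this invalidates the proof of (a), and (b) inherits the same problem.

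Everything else is essentially fine: your remarks on regularity, $u\to0$ at infinity, the coercivity of $L$ on $\Omega_R$ and the resulting maximum principle, the decomposition $u=u_1-u_2$ in (c), the Harnack-based lower bound $G(x,y)\ge c_B H(x)$ for $y$ in a fixed ball, and the reduction of (c) to (a) and (b) are all in the spirit of the paper's argument (the paper gets the lower bound in (c) via the maximum principle comparing $u_1$ with $\varepsilon H$, but your Harnack variant is a legitimate alternative). To repair (a), discard the $3G$ step and follow the paper: replace $w$ by $\eta w$ globally, let $H_1$ be the Green's function of $-\Delta+(V-\eta w)$ on $\dR^N$, use Ancona's theorem \cite{MR1482989} to get the two-sided comparison $H_1\asymp H$ with constant $\to1$ as $R\to\infty$, and compare $u$ with $H_1$ on $\Omega_R$ by the maximum principle.
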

\begin{proof}
  \noindent\textbf{\ref{item:12}} Standard \emph{a priori} estimates, as
  mentioned in the proof of \th\ref{lem:a-priori-decay}, yield that
  $u\in L^\infty(\dR^N)$.  Hence also $wu$ has exponential decay at
  infinity and \th\ref{lem:a-priori-decay} yields in particular that
  \begin{equation}
    \label{eq:23}
    u(x)\to0\qquad\text{as }\abs{x}\to\infty.
  \end{equation}

  We take $R>1$ large enough such that
  \begin{equation*}
    \sup\abs{w}\le \varepsilon_0\coloneqq\frac{\mu_0}{2}
  \end{equation*}
  in $\dR^N\ssm \olB_{R-1}$ and define $\eta\colon\dR^N\to[0,1]$ by
  \begin{equation*}
    \eta(x)\coloneqq
    \begin{cases}
      0,&\qquad \abs{x}\le R-1,\\
      \abs{x}-R+1,&\qquad R-1\le\abs{x}\le R,\\
      1,&\qquad \abs{x}\ge R.
    \end{cases}
  \end{equation*}
  Then $\inf (V-\eta w)\ge\varepsilon_0>0$.  Hence also $T_1\coloneqq
  -\Delta+(V-\eta w)$ is subcritical on $\dR^N$ and possesses a
  positive Green's function $G_1$.  Since we are not assuming $w$ to
  be locally Hölder continuous, here we refer to \cite{MR890161} and
  \cite{MR874676} for the existence of the positive Green's function.
  Set $H_1(x):=G_1(x,0)$ for $x\neq0$.  In the notation of
  \cite{MR1482989} use our $\varepsilon_0$ and set $r_0:=1/4$,
  $c_0\coloneqq1$, and $p:=2N$.  Note that the bottom of the spectrum
  of $T$ and $T_1$ as operators in $L^2$ with domain $H^2$ is greater
  than or equal to $\varepsilon_0$.  Denote
  \begin{equation*}
    \widetilde{C}:=\sup\biglr\{\}{\,\norm{v}_{L^N(\olB_{r_0})}\bigm| 
      v\in L^\infty(\olB_{r_0}),\ \norm{v}_{L^\infty(\olB_{r_0})}=1\,}
  \end{equation*}
  and set $\theta:=1+\widetilde{C}(C_1+\norm{V}_\infty)$.  Define the decreasing
  function
  \begin{equation*}
    \Psi_R(s):=
    \begin{cases}
      C_1\rme^{-C_2(R-1)}&\qquad 0\le s\le R\\
      C_1\rme^{-C_2(s-1)}&\qquad s\ge R
    \end{cases}
  \end{equation*}
  so $\norm{\eta w} _{L^\infty(\olB_{r_0}(y))} \le
  \Psi_R(\abs{y})$ for $y\in\dR^N$.  Using these constants, the
  function $\Psi_R$ and the fact that
  \begin{equation*}
    \lim_{R\to\infty}\int_0^\infty\Psi_R=0,
  \end{equation*}
  \cite[Theorem~1]{MR1482989} yields
  \begin{equation}
    \label{eq:2}
    \frac1{1+\delta} H(x)\le H_1(x)\le (1+\delta)H(x)
  \end{equation}
  for $\abs{x}\ge r_0$ if $R$ is chosen large enough, only depending
  on $\delta$, $N$, $\inf(V)$, $\norm{V}_\infty$, $C_1$ and $C_2$.

  The function $H_1$ is continuous in $\dR^N\ssm\{0\}$ and satisfies
  $T_1H_1\equiv0$ in $\dR^N\ssm\{0\}$ in the weak sense.  Moreover,
  $T_1u\equiv 0$ on $\dR^N\ssm \olB_R$ in the weak sense.  Set
  \begin{equation*}
    C_3\coloneqq(1+\delta)^2 \max_{x\in S_R}\frac{\abs{u(x)}}{H(x)}
  \end{equation*}
  Then we have by \eqref{eq:2}
  \begin{equation*}
    \abs{u}
    \le\frac{C_3}{(1+\delta)^2}H
    \le \frac{C_3}{1+\delta}H_1
    \qquad\text{on } S_R.
  \end{equation*}
  Note that $H_1(x)\to0$ as $\abs{x}\to\infty$, by
  \th\ref{lem:properties-h}\ref{item:8} and \eqref{eq:2}.  Hence
  \eqref{eq:23}, the maximum principle for weak
  supersolutions \cite[Theorem~8.1]{MR737190} and again \eqref{eq:2} yield
  \begin{equation*}
    \abs{u}\le\frac{C_3}{1+\delta}H_1\le C_3 H
    \qquad\text{on }\dR\ssm B_R,
  \end{equation*}
  that is, \eqref{eq:18}.  Together with
  \th\ref{lem:properties-h}\ref{item:8} we obtain \eqref{eq:14}.

  \noindent\textbf{\ref{item:13}} Define
  \begin{equation*}
    C_4:=(1+\delta)^2 \max_{x\in S_R}\frac{H(x)}{u(x)}.
  \end{equation*}
  Then \eqref{eq:2} implies that
  \begin{equation*}
    H_1\le(1+\delta)H\le\frac{C_4}{1+\delta}u
    \qquad\text{on }S_R.
  \end{equation*}
  The maximum principle  yields
  \begin{equation*}
    H_1\le\frac{C_4}{1+\delta}u
    \qquad\text{on }\dR\ssm B_R,
  \end{equation*}
  so \eqref{eq:2} implies \eqref{eq:19}.

  \noindent\textbf{\ref{item:14}} The operator $T\colon
  H^2(\dR^N)\to L^2(\dR^N)$ has a bounded inverse by \ref{item:1}.
  Denote $v^+\coloneqq\max\{0,v\}$ and set $v^-\coloneqq v^+-v$.
  Define $u_1\coloneqq T^{-1}v^+\in H^1(\dR^N)$ and $u_2\coloneqq
  T^{-1}v^-\in H^1(\dR^N)$.  Again we find by
  \th\ref{lem:a-priori-decay} that
  \begin{equation*}
    u_i(x)\to0\qquad\text{as }\abs{x}\to\infty,\ i=1,2.
  \end{equation*}
  If $u_1$ is not the zero function then it is positive, by the strong
  maximum principle.  Using
  \begin{equation*}
    \left.\begin{aligned}
      Tu_1&\ge 0\\
      TH&=0
    \end{aligned}\quad\right\}
  \qquad\text{in }\dR^N\ssm\{0\}
  \end{equation*}
  the maximum principle yields
  \begin{equation*}
          0
      <\liminf_{\abs{x}\to\infty}\frac{u_1(x)}{H(x)}.
  \end{equation*}
  Hence also $v^+/u_1$ decays exponentially at infinity, and
  \ref{item:12} implies that
  \begin{equation*}
    \limsup_{\abs{x}\to\infty}\frac{u_1(x)}{H(x)}
      <\infty.
  \end{equation*}
  This yields \eqref{eq:21} for $i=1$.  The case $i=2$ follows
  analogously.
\end{proof}

For the semilinear problem \eqref{eq:12} we obtain:

\begin{corollary}\th\label{cor:compare-solutions}
  Assume \ref{item:1}.  Suppose that $f\colon\dR\to\dR$ is continuous
  and that there are $C,M>0$ and $q>1$ such that $\abs{f(u)}\le
  C\abs{u}^q$ for $\abs{u}\le M$.  If $u$ is a weak solution of
  \eqref{eq:12} then $u$ has the properties claimed in
  \th\ref{thm:decay-at-infty}, \ref{item:12} and \ref{item:14}.  If
  in addition $u$ is positive, then $u$ has the property claimed in
  \th\ref{thm:decay-at-infty}\ref{item:13}.
\end{corollary}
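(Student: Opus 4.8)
The plan is to reduce the semilinear equation \eqref{eq:12} to the linear situation of \th\ref{thm:decay-at-infty} by absorbing $f(u)$ into an exponentially decaying potential perturbation. First I would observe that the growth bound $\abs{f(u)}\le C\abs{u}^q$ with $q>1$ forces $f(0)=0$ and $f(u)=o(u)$ as $u\to0$, so that \th\ref{lem:a-priori-decay} applies in its first case: the weak solution $u$ of \eqref{eq:12} is continuous, bounded, and decays exponentially at infinity, say $\abs{u(x)}\le A_1\rme^{-A_2\abs{x}}$ for some $A_1,A_2>0$; in particular $u(x)\to0$ as $\abs{x}\to\infty$.

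Next I would introduce $w\colon\dR^N\to\dR$ by setting $w(x):=f(u(x))/u(x)$ when $u(x)\neq0$ and $w(x):=0$ otherwise. Since $f(0)=0$ one has $w(x)u(x)=f(u(x))$ for every $x$, so $u$ is a weak solution of $-\Delta u+(V-w)u=0$. The function $w$ is bounded: on $\{\abs{u}\le M\}$ one has $\abs{w}\le C\abs{u}^{q-1}\le CM^{q-1}$, and on $\{\abs{u}>M\}$ one has $\abs{w}\le M^{-1}\sup_{\abs{s}\le\norm{u}_\infty}\abs{f(s)}$. Picking $R_1$ with $\abs{u(x)}\le M$ for $\abs{x}\ge R_1$, the estimate $\abs{w(x)}\le C\abs{u(x)}^{q-1}\le CA_1^{q-1}\rme^{-(q-1)A_2\abs{x}}$ holds there, and after enlarging the constant so as to also cover the bounded region $\olB_{R_1}$ (on which $w$ is bounded and $\rme^{-C_2\abs{x}}$ is bounded below) I obtain a global bound $\abs{w(x)}\le C_1\rme^{-C_2\abs{x}}$ with $C_2:=(q-1)A_2$. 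Thus $w$ satisfies the hypotheses of \th\ref{thm:decay-at-infty}, and the conclusions of \th\ref{thm:decay-at-infty}\ref{item:12} and, when $u>0$, of \th\ref{thm:decay-at-infty}\ref{item:13} follow at once.

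For the part corresponding to \th\ref{thm:decay-at-infty}\ref{item:14} I would instead regard $u$ as a weak solution of $-\Delta u+Vu=v$ with $v:=f(u)\in L^\infty(\dR^N)$, and verify that $v/H$ decays exponentially at infinity. Indeed, by the part just established, \eqref{eq:14} together with \th\ref{lem:properties-h}\ref{item:8} gives $\limsup_{\abs{x}\to\infty}\abs{u(x)}/H(x)<\infty$, so $\abs{u(x)}\le C_5 H(x)$ for large $\abs{x}$ (where also $\abs{u(x)}\le M$), whence
\[
  \frac{\abs{v(x)}}{H(x)}=\frac{\abs{f(u(x))}}{H(x)}\le\frac{C\abs{u(x)}^q}{H(x)}\le CC_5^{q}\,H(x)^{q-1},
\]
and this decays exponentially since $H$ does with exponent $\sqrt{\mu_0}$ by \th\ref{lem:properties-h}\ref{item:8} and $q-1>0$. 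Therefore \th\ref{thm:decay-at-infty}\ref{item:14} applies to $u$ with this $v$, and the corollary follows. I do not expect a genuine obstacle; the only points needing a little care are the global boundedness of $w$ (the region where $u$ is not small, and the possible zero set of $u$) and the upgrade of the exponential estimate on $w$ from a neighbourhood of infinity to all of $\dR^N$, both of which are routine.
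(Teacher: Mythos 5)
Your proposal is correct and follows essentially the same route as the paper: apply \th\ref{lem:a-priori-decay} for a priori exponential decay, set $w=f(u)/u$ and observe it decays exponentially so that \th\ref{thm:decay-at-infty}\ref{item:12}--\ref{item:13} apply, and then note that $f(u)/H$ decays exponentially to invoke \th\ref{thm:decay-at-infty}\ref{item:14}. You have merely spelled out the routine bookkeeping (defining $w$ at the zeros of $u$, checking $w\in L^\infty$, upgrading the tail estimate to a global one) that the paper leaves implicit.
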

\begin{proof}
  By our hypotheses on $f$ \th\ref{lem:a-priori-decay} implies
  exponential decay of $u$ at infinity.  Hence also $w\coloneqq
  f(u)/u$ decays exponentially at infinity.  Since $u$ is a solution
  of $-\Delta u+(V-w)u=0$ \th\ref{thm:decay-at-infty}\ref{item:12}
  applies.  Therefore also $f(u)/H$ has exponential decay at
  infinity.  These facts yield the claims.
\end{proof}

\section{Real Analyticity}
\label{sec:real-analyticity}

Using the precise decay results of the previous section we construct a
weighted space $Y$ of continuous functions that contains all solutions
of \eqref{eq:12} and is such that the positive solutions are contained
in the interior of the positive cone of $Y$.  Assuming analyticity of
the nonlinearity (on $(0,\infty)$) with appropriate growth bounds we
obtain a setting where the (positive) solution set is locally a finite
dimensional analytic set and hence locally path connected.

Denote $2^*\coloneqq \infty$ if $N=1$ or $2$, $2^*\coloneqq 2N/(N-2)$
if $N\ge3$ and consider the following conditions on $f$:

\begin{enumerate}[label=\textup{(F\arabic*)},series=fcond]
\item \label{item:2} $f\in C^1(\dR)$, $f(0)=f'(0)=0$;
\item \label{item:4} $f$ is analytic in $\dR$ and for every $M>0$
  there are numbers $a_k\in\dR$ ($k\in\dN_0$) such that
  \begin{equation*}
    \limsup_{k\to\infty}\frac{a_k}{k!}<\infty
  \end{equation*}
  and
  \begin{equation*}
    \abs{f^{(k)}(u)}\le a_k\abs{u}^{\max\{0,2-k\}}
  \end{equation*}
  for $\abs{u}\le M$ and $k\in\dN_0$.
\item \label{item:3} $f$ is analytic in $\dR^+$ and for every
  $M>0$ there are numbers $p\in(1,2^*-1)$ and $a_k\in\dR$
  ($k\in\dN_0$) such that
  \begin{equation*}
    \limsup_{k\to\infty}\frac{a_k}{k!}<\infty
  \end{equation*}
  and
  \begin{equation*}
    \abs{f^{(k)}(u)}\le a_k\abs{u}^{p-k}
  \end{equation*}
  for $u\in(0,M]$ and $k\in\dN_0$; in this case we are only interested in
  positive solutions of \eqref{eq:12} and may take $f$ to be odd, for
  notational convenience.
\item \label{item:20} There are $C>0$ and $\tilde q\in(1,2^*-1)$ such
  that $\abs{f(u)}\le C(1+\abs{u}^{\tilde q})$ for all $u\in\dR$.
\end{enumerate}

To give a trivial example of a function satisfying these conditions,
take $p$ as in condition \ref{item:3}.  Then
$f(u)\coloneqq\abs{u}^{p-1}u$ satisfies conditions \ref{item:2},
\ref{item:3} and \ref{item:20}.

If either \ref{item:4} or \ref{item:3} holds true, then there is $q>1$
such that for every $M>0$ there are $a_0,a_1\in\dR$ such that
\begin{equation}
  \label{eq:4}
  \abs{f(u)}\le a_0\abs{u}^q\quad\text{and}\quad
  \abs{f'(u)}\le a_1\abs{u}^{q-1}\qquad \text{if }\abs{u}\le M.
\end{equation}
To see this take $q\coloneqq2$ if \ref{item:4} holds true, take
$q\coloneqq p$ if \ref{item:3} holds true, and use the respective
numbers $a_0$ and $a_1$ given for $M$ by these hypotheses.

Denote by $K$ the set of non-zero solutions of \eqref{eq:12} and set
$K_+\coloneqq\{u\in K\mid u\ge 0\}$.  Denote by $\cF$ the superposition
operator induced by $f$.  Then every $u\in K$ satisfies $Tu=\cF(u)$.
Our goal is to produce a Banach space $Y$ such that
\begin{equation*}
  \begin{aligned}
    \Gamma\colon Y&\to Y\\ u&\mapsto u- T^{-1}\cF(u)
  \end{aligned}
\end{equation*}
is well defined and such that $K\subseteq Y$ is the zero set of
$\Gamma$.  Moreover, we need $\Gamma$ to be a Fredholm map, analytic
in a neighborhood of $K$ if \ref{item:4} holds true, and analytic in a
neighborhood of $K_+$ if \ref{item:3} holds true.  In the latter case,
because $f$ is not analytic at $0$ we need that $K_+$ belongs to the
interior of the positive cone of $Y$.

Consider the function $H$ defined in Section~\ref{sec:comp-solut}.
Pick a number $b_0\in(0,\infty)$ such that
$b_0\le\liminf_{x\to0}H(x)$.  By \th\ref{lem:properties-h} the
function $\varphi\colon\dR^N\to\dR^N$ defined by
\begin{equation*}
  \varphi(x)\coloneqq\min\{b_0,H(x)\}
\end{equation*}
is continuous, positive, and has the same decay at infinity as $H$.
Define the spaces
\begin{equation*}
  X_\alpha:=\bigglr\{\}{\,u\in C(\dR^N)\biggm| \norm{u}_{X_\alpha}
    := \sup_{x\in\dR^N}\xabs{\frac{u(x)}{\varphi(x)^\alpha}}<\infty\,}
\end{equation*}
for $\alpha>0$.  Together with its weighted norm
$\norm{\,\cdot\,}_{X_\alpha}$, $X_\alpha$ is a Banach space.  Set
\begin{equation*}
  Y:=X_1\cap C^1_{\rmb}(\dR^N)\cap H^1(\dR^N)
\end{equation*}
and $\norm{\cdot}_Y:=\norm{\cdot}_{X_1} + \norm{\cdot}_{C^1_{\rmb}} +
\norm{\cdot}_{H^1}$.  By \eqref{eq:4} and
\th\ref{cor:compare-solutions} $K\subseteq Y$.

We prove the basic properties of the space $Y$ and related mapping
properties of the maps $T$ and $\cF$:

\begin{lemma}
  \th\label{lem:properties-y} Suppose that \ref{item:1}, \ref{item:2}
  and one of \ref{item:4} or \ref{item:3} are satisfied.  Then the
  following hold true:
  \begin{enumerate}[label=\textup{(\alph*)}]
  \item \label{item:15} $T^{-1}\colon X_\alpha\to Y$ is well defined
    and continuous if $\alpha>1$.
  \item \label{item:16} Let $q$ be given by \eqref{eq:4}.  If
    $\alpha<\min\{2,q\}$ then $\cF(Y)\subseteq X_\alpha$, and
    $\cF\colon Y\to X_\alpha$ is completely continuous, i.e., it is
    continuous and maps bounded sets into relatively compact sets.
    Moreover, it is continuously differentiable in $Y$.
  \item \label{item:23} The set $K_+$ is contained in the interior of
    the positive cone of $Y$.
  \item \label{item:17} If \ref{item:20} is satisfied then on $K$ the
    $H^1$-topology and the $Y$-topology coincide.
  \end{enumerate}
\end{lemma}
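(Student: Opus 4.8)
The plan is to establish \ref{item:15} and \ref{item:16} by direct weighted estimates, invoking \th\ref{thm:decay-at-infty} for the decay of $T^{-1}v$, and then to deduce \ref{item:23} and \ref{item:17} from these together with \th\ref{cor:compare-solutions}. For \ref{item:15}: if $v\in X_\alpha$ then $\abs{v}\le\norm{v}_{X_\alpha}\varphi^\alpha$, and since $\varphi$ decays exponentially (\th\ref{lem:properties-h}\ref{item:8}) this gives $v\in L^2\cap L^\infty$; hence $u:=T^{-1}v\in H^2$ is well defined, and bootstrapping $-\Delta u=v-Vu$ with the regularity estimates mentioned in the proof of \th\ref{lem:a-priori-decay} yields $u\in C^1_\rmb(\dR^N)\cap H^1(\dR^N)$. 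Because $\alpha>1$, the quotient $v/H=(v/\varphi^\alpha)(\varphi^\alpha/H)$ decays exponentially at infinity, as $\varphi^\alpha/H=H^{\alpha-1}$ there; thus \th\ref{thm:decay-at-infty}\ref{item:14} applies and writes $u=u_1-u_2$ with each $u_i/H$ bounded near infinity, so that $u/\varphi$ is bounded (on compact sets use continuity of $u$ and $\inf\varphi>0$), i.e.\ $u\in X_1$. Hence $T^{-1}$ maps $X_\alpha$ into $Y$; continuity follows from the closed graph theorem, using $X_\alpha\hookrightarrow L^2$ and $Y\hookrightarrow L^2$ continuously and boundedness of $T^{-1}\colon L^2\to H^2$.

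For \ref{item:16}, fix $u\in Y$ and put $M:=\norm{u}_{C^1_\rmb}$; by \eqref{eq:4} with this $M$ one gets $\abs{\cF(u)}\le a_0\norm{u}_{X_1}^q\varphi^q\le a_0 b_0^{q-\alpha}\norm{u}_{X_1}^q\varphi^\alpha$, since $\varphi\le b_0$ and $q>\alpha$, so $\cF(u)\in X_\alpha$; the integral form of the mean value theorem together with $\abs{f'(u)}\le a_1\abs{u}^{q-1}$ then shows $\cF$ is locally Lipschitz, hence continuous, from $Y$ to $X_\alpha$. For complete continuity take a bounded sequence $(u_n)$ in $Y$; by Arzelà--Ascoli a subsequence converges in $C^1_\rmloc$ to some $u\in Y$, on a ball $\olB_\rho$ uniform continuity of $f$ on $[-M,M]$ and $\varphi\ge c_\rho>0$ give $\cF(u_n)\to\cF(u)$ in the weighted norm, while on $\dR^N\ssm\olB_\rho$ one has $\abs{\cF(u_n)}/\varphi^\alpha\le C\varphi^{q-\alpha}$, uniformly small for $\rho$ large because $q>\alpha$ and $\varphi\to0$; hence $\cF(u_n)\to\cF(u)$ in $X_\alpha$. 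For differentiability the candidate is $D\cF(u)h=f'(u)h$, bounded $Y\to X_\alpha$ again by $\abs{f'(u)}\le a_1\abs{u}^{q-1}$ and $q>\alpha$; the remainder $\int_0^1(f'(u+th)-f'(u))h\dint t$ is estimated through the modulus of continuity of $f'$ on $[-M,M]$: under \ref{item:4}, or \ref{item:3} with $p\ge2$, $f'$ is Lipschitz there, the pointwise bound is $C\abs{h}^2$, and the weighted remainder is $\le C\norm{h}_{X_1}^2\sup\varphi^{2-\alpha}=o(\norm{h}_Y)$ precisely because $\alpha<2$; under \ref{item:3} with $p<2$, $f'$ is $(p-1)$-Hölder there, the bound is $C\abs{h}^p$, and the weighted remainder is $\le C\norm{h}_{X_1}^p\sup\varphi^{p-\alpha}=o(\norm{h}_Y)$ because $p=q>1$ and $\alpha<q$. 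The same dichotomy gives continuity of $u\mapsto D\cF(u)$; this is the step using the full strength of $\alpha<\min\{2,q\}$.

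For \ref{item:23}, let $u\in K_+$: by the strong maximum principle $u>0$ on $\dR^N$, and \th\ref{cor:compare-solutions} (the conclusion corresponding to \th\ref{thm:decay-at-infty}\ref{item:13}) gives $\liminf_{\abs{x}\to\infty}u(x)/H(x)>0$; since $\varphi=H$ near infinity and $u/\varphi$ is continuous and positive on every ball, $c:=\inf_{\dR^N}u/\varphi>0$, and then $\norm{v-u}_Y<c$ forces $\abs{v(x)-u(x)}\le\norm{v-u}_{X_1}\varphi(x)<c\varphi(x)\le u(x)$ for all $x$, so $v>0$; thus $B_c(u;Y)$ lies in the positive cone. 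For \ref{item:17}, as $Y\hookrightarrow H^1$ it suffices to show $u_n\to u$ in $H^1$, with $u_n,u\in K$, implies $u_n\to u$ in $Y$. Using \ref{item:20} and elliptic bootstrap, an $H^1$-bounded family in $K$ is bounded in $L^\infty$ and, reusing the proofs of \th\ref{lem:a-priori-decay} and \th\ref{thm:decay-at-infty}\ref{item:12} — whose constants depend only on $N$, $\mu_0$, $\norm{V}_\infty$, the $L^\infty$-bound and the behavior of $f$ near $0$ — decays exponentially at infinity uniformly, hence is bounded in $Y$; in particular $\abs{u_n(x)}\le C\rme^{-\sqrt{\mu_0}\abs{x}}$ with $C$ independent of $n$. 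Now $u_n-u$ solves $-\Delta(u_n-u)+(V-w_n)(u_n-u)=0$ with $w_n:=\int_0^1 f'(u+t(u_n-u))\dint t$, and the uniform decay gives $\abs{w_n(x)}\le a_1\bigl(C\rme^{-\sqrt{\mu_0}\abs{x}}\bigr)^{q-1}$, an exponential bound with constants independent of $n$; bootstrapping the equation from $\norm{u_n-u}_{H^1}\to0$ gives $\norm{u_n-u}_{C^1_\rmb}\to0$, and then \th\ref{thm:decay-at-infty}\ref{item:12} — whose threshold $R_0$ is now the same for all $n$ — yields, for a fixed large $R$, that $\abs{u_n-u}\le(1+\delta)^2(\max_{S_R}\abs{u_n-u}/H)\,H$ on $\dR^N\ssm B_R$, so $\sup_{\dR^N\ssm B_R}\abs{u_n-u}/\varphi\to0$ (as $\varphi=H$ there), while on $\olB_R$ the weighted quotient is $\le\norm{u_n-u}_{L^\infty}/\min_{\olB_R}\varphi\to0$; hence $\norm{u_n-u}_{X_1}\to0$ and $u_n\to u$ in $Y$.

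The main obstacle is \ref{item:17}. Since $X_1$ carries a weighted \emph{supremum} norm it behaves like $\ell^\infty$ rather than like a space of functions vanishing at infinity, so bounded sequences need not have $X_1$-convergent subsequences and a compactness argument is unavailable. The remedy is to argue with the difference $u_n-u$ directly, exploiting that it solves a \emph{linear} Schrödinger equation whose zeroth order perturbation $w_n$ decays exponentially \emph{uniformly in $n$}; this is precisely why the uniform $Y$-bound on $(u_n)$ must be secured first, and it is the only place where the uniform dependence of the constant $R_0$ in \th\ref{thm:decay-at-infty}\ref{item:12} on the data is essential. A lesser technical nuisance is pinning down the remainder exponent in \ref{item:16}, which is exactly what forces the restriction $\alpha<\min\{2,q\}$.
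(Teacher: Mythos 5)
Parts \ref{item:15}, \ref{item:16}, and \ref{item:23} follow the paper's proof closely: the closed graph argument for $T^{-1}\colon X_\alpha\to X_1$ combined with the $L^s\to W^{2,s}$ regularity chain, the weighted estimates on $\cF$ and $\cF_1$ with the exponent dichotomy $\beta=\min\{1,q-1\}$, and the pointwise lower bound $C_1\varphi\le u$ from \th\ref{cor:compare-solutions} for the interior-of-cone statement. Your bookkeeping of the exponent constraint $\alpha<\min\{2,q\}$ is equivalent to the paper's condition $\alpha<\beta+1$.

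For \ref{item:17} you take a genuinely different route, and your closing remark mischaracterizes the paper's strategy. You are right that bounded sequences in $X_1$ need not have convergent subsequences, so one cannot extract a subsequence of $(u_n)$ directly. But a compactness argument is available: the paper exploits the fixed-point identity $u_n=T^{-1}\cF(u_n)$, where $\cF\colon Y\to X_\alpha$ is completely continuous by \ref{item:16} and $T^{-1}\colon X_\alpha\to Y$ is continuous by \ref{item:15}, so $T^{-1}\cF$ maps the (now established) bounded set $(u_n)\subseteq Y$ into a relatively compact subset of $Y$. Passing to a subsequence, $u_n=T^{-1}\cF(u_n)$ converges in $Y$, and the limit is identified as $u$ via $Y\hookrightarrow H^1$. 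Your alternative — writing $u_n-u$ as a solution of a linear Schrödinger equation with uniformly exponentially decaying potential perturbation $w_n$, upgrading $H^1$-convergence to $C^1_\rmb$-convergence by bootstrap, and then applying the quantitative estimate \eqref{eq:18} of \th\ref{thm:decay-at-infty}\ref{item:12} with a threshold $R_0$ uniform in $n$ — is valid and has the merit of giving full-sequence convergence without extraction, but it requires establishing $C^1_\rmb$-convergence and tracking the uniformity of $R_0$, steps the paper avoids. Both approaches require the preliminary uniform exponential decay of $(u_n)$ (the paper cites \cite[Prop.~5.2]{MR2151860} for this; your appeal to ``reusing the proofs of'' \th\ref{lem:a-priori-decay} and \th\ref{thm:decay-at-infty}\ref{item:12} gestures at the same fact but should be pinned down, since the uniformity across a bounded family is not an automatic consequence of the single-solution statement).
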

\begin{proof}
  \textbf{\ref{item:15}:} For any $s\ge2$ the linear mapping
  $T^{-1}\colon L^s(\dR^N)\to W^{2,s}(\dR^N)$ is well defined and
  continuous because of \ref{item:1}.  If $v\in X_\alpha\subseteq
  L^s(\Omega)$ then by the definition of $X_\alpha$ and by
  \th\ref{lem:properties-h}\ref{item:8} the function $v/H$ decays
  exponentially at infinity.  For $u\coloneqq T^{-1}v$ it follows from
  \th\ref{thm:decay-at-infty}\ref{item:14} that $u\in X_1$.
  Therefore
  \begin{equation*}
    \begin{tikzpicture}[commutative diagrams/every diagram]
      \matrix[matrix of math nodes, name=m, commutative diagrams/every cell] {
        X_1 & L^s \\
        X_\alpha & L^s \\};
      \path[commutative diagrams/.cd, every arrow, every label]
      (m-1-1) edge[commutative diagrams/hook] (m-1-2)
      (m-2-1) edge[commutative diagrams/hook] (m-2-2)
              edge node {$T^{-1}$} (m-1-1)
      (m-2-2) edge node[swap] {$T^{-1}$} (m-1-2);
    \end{tikzpicture}
  \end{equation*}
  is a commuting diagram of linear maps between Banach spaces, where
  the inclusions and the map $T^{-1}\colon L^s\to L^s$ are continuous.
  By the closed graph theorem also $T^{-1}\colon X_\alpha\to X_1$ is
  continuous.  Moreover, if $s>N$ we have continuous maps
  \begin{equation*}
    X_\alpha\hookrightarrow L^s\xrightarrow{T^{-1}}
    W^{2,s}\hookrightarrow C^1_{\rmb}
  \end{equation*}
  so $T^{-1}\colon X_\alpha \to C^1_{\rmb}$ is continuous.  Similarly,
  \begin{equation*}
    X_\alpha\hookrightarrow L^2\xrightarrow{T^{-1}}
    H^2\hookrightarrow H^1
  \end{equation*}
  and therefore $T^{-1}\colon X_\alpha\to H^1$ is continuous.  All in
  all we have proved \ref{item:15}.

  \textbf{\ref{item:16}:} Note that $\cF(u)\in X_q \subseteq X_\alpha$
  if $u\in X_1$, by \eqref{eq:4}.  To see the continuous
  differentiability of $\cF$ in $Y$, note that $f'$ is locally Hölder
  (respectively Lipschitz) continuous in $\dR$ with exponent
  $\beta\coloneqq\min\{1,q-1\}$, as a consequence of \ref{item:4} or
  \ref{item:3}, respectively.  In what follows we repeatedly pick
  arbitrary $u,v,w\in X_1$ and $C>0$ such that
  $\abs{f'(s)-f'(t)}\le C\abs{s-t}^\beta$ for all $s,t\in \dR$ with
  $\abs{s},\abs{t}\le \norm{u}_\infty+ \norm{v}_\infty$.  Define
  $\cF_1$ to be the superposition operator induced by $f'$.  First we
  show that $\cF_1(u)\in\cL(X_1, X_\alpha)$ as a multiplication
  operator and that $\cF_1\colon X_1\to \cL(X_1,X_\alpha)$ is
  continuous.  Pick $a_1$ in \eqref{eq:4} for
  $M\coloneqq\norm{u}_\infty$.  Then we find
  \begin{equation*}
    \norm{\cF_1(u)w}_{X_\alpha}
    \le a_1\norm{\varphi^{q-\alpha}}_\infty\norm{u}_{X_1}^{q-1}\norm{w}_{X_1}
  \end{equation*}
  with $\norm{\varphi^{q-\alpha}}_\infty<\infty$ since $\alpha<q$.
  Hence $\cF_1(u)\in\cL(X_1,X_\alpha)$.  Similarly,
  \begin{equation*}
    \norm{(\cF_1(u)-\cF_1(v))w}_{X_\alpha}
    \le C\norm{\varphi^{\beta+1-\alpha}}_\infty\norm{u-v}_{X_1}^\beta\norm{w}_{X_1}
  \end{equation*}
  with $\norm{\varphi^{\beta+1-\alpha}}_\infty<\infty$ since
  $\alpha<\beta+1$.  Hence
  \begin{equation*}
    \norm{\cF_1(u)-\cF_1(v)}_{\cL(X_1,X_\alpha)}
    \le C\norm{\varphi^{\beta+1-\alpha}}_\infty\norm{u-v}_{X_1}^\beta
  \end{equation*}
  and $\cF_1$ is Hölder continuous.  For any $x\in\dR^N$ and
  $t\in\dR\ssm\{0\}$ there is $\theta_{x,t}\in(-\abs{t},\abs{t})$ such
  that
  \begin{multline*}
    \xabs{\frac{f(u(x)+tv(x))-f(u(x))}{t}-f'(u(x))v(x)}
    =\xabs{f'(u(x)+\theta_{x,t}v(x))-f'(u(x))}\abs{v(x)}\\
    \le C\abs{\theta_{x,t}v(x)}^\beta\abs{v(x)}
    \le C\abs{v(x)}^{\beta+1}\abs{t}^\beta.
  \end{multline*}
  It follows that
  \begin{equation*}
    \xnorm{\frac{\cF(u+tv)-\cF(u)}{t}-\cF_1(u)v}_{X_\alpha}
    \le C\norm{\varphi^{\beta+1-\alpha}}_\infty\norm{v}_{X_1}^{\beta+1}\abs{t}^\beta
  \end{equation*}
  and hence that $\cF$ is Gâteaux differentiable in $u$ with
  derivative $\cF_1(u)$.  Since $\cF_1$ is continuous, $\cF$ is
  continuously Fréchet differentiable as a map $X_1\mapsto X_\alpha$,
  and thus $Y\hookrightarrow X_1$ implies continuous differentiability
  of $\cF\colon Y\to X_\alpha$.

  Suppose now that $(u_n)\subseteq Y$ is bounded in $Y$ and hence
  bounded in $X_1$ and $C^1_{\rmb}(\dR^N)$.  Passing to a subsequence
  we can suppose by Arzelà-Ascoli's theorem that $(u_n)$ converges
  locally uniformly in $\dR^N$ to some $u\in C_\rmb(\dR^N)$.  Since
  $f$ is uniformly continuous on compact intervals, $\cF(u_n)$
  converges to $\cF(u)$ locally uniformly in $\dR^N$.  There is $C>0$
  such that
  \begin{equation}\label{eq:15}
    u,u_n\le C\varphi\qquad\text{in }\dR^N, \text{ for all }n\in\dN.
  \end{equation}
  For any $\varepsilon>0$ \eqref{eq:4} and \eqref{eq:15} imply that
  there are a constant $R>0$, constants $C>0$, and $n_0\in\dN$ such
  that for all $n\ge n_0$ it holds true that
  \begin{align*}
    \frac{\abs{\cF(u_n)}}{\varphi^\alpha} \le C\varphi^{q-\alpha}
    &\le\frac{\varepsilon}{3}\qquad\text{in }\dR^N\ssm B_R,\\
    \frac{\abs{\cF(u)}}{\varphi^\alpha} \le C\varphi^{q-\alpha}
    &\le\frac{\varepsilon}{3}\qquad\text{in }\dR^N\ssm B_R,\\
    \shortintertext{and} \frac{\abs{\cF(u_n)-\cF(u)}}{\varphi^\alpha}
    \le C\norm{\cF(u_n)-\cF(u)}_\infty
    &\le\frac{\varepsilon}{3}\qquad\text{in }\olB_R.
  \end{align*}
  It follows for $n\ge n_0$ that
  \begin{equation*}
    \norm{\cF(u_n)-\cF(u)}_{X_\alpha}
    \le\sup_{\olB_R}\frac{\abs{\cF(u_n)-\cF(u)}}{\varphi^\alpha}
    +\sup_{\dR^N\ssm
      B_R}\frac{\abs{\cF(u_n)}+\abs{\cF(u)}}{\varphi^\alpha}
    \le\varepsilon
  \end{equation*}
  and hence $\cF(u_n)\to\cF(u)$ in $X_\alpha$.  This proves that $\cF$
  maps bounded sets in $Y$ into relatively compact sets in $X_\alpha$.
  Since $\cF$ is differentiable, it is completely continuous.

  \textbf{\ref{item:23}:} Fix $u\in K_+$.  By
  \th\ref{cor:compare-solutions} there is $C_1>0$ such that
  $C_1\varphi\le u$ in $\dR^N$.  For any $v\in Y$ such that
  $\norm{u-v}_Y\le C_1/2$ it follows that $\norm{u-v}_{X_1}\le C_1/2$
  and hence
  \begin{equation*}
    v\ge u-\abs{u-v}\ge\frac{C_1\varphi}{2}>0
  \end{equation*}
  in $\dR^N$.  Therefore, $u$ lies in the interior of the positive
  cone of $Y$.  

  \textbf{\ref{item:17}:} It suffices to prove that on $K$ the
  $H^1$-topology is finer than the $Y$-topology.  Therefore, assume
  that $u_n\to u$ in $K$ with respect to the $H^1$-topology and
  suppose by contradiction that $u_n\not\to u$ in $Y$.  Passing to a
  subsequence we can assume that there is $\delta>0$ such that
  \begin{equation}
    \label{eq:7}
    \norm{u_n-u}_Y\ge\delta\qquad\text{for all }n\in\dN.
  \end{equation}
  By \ref{item:20} and standard elliptic regularity estimates, $(u_n)$
  is bounded in $C^1_\rmb(\dR^N)$.  Moreover, the proof of
  \cite[Prop.~5.2]{MR2151860} yields, together with regularity
  estimates, that the functions $u_n$ have a uniform pointwise
  exponential decay as $\abs{x}\to\infty$.  In view of \eqref{eq:4} we
  obtain $C_1,C_2>0$ such that
  \begin{equation*}
    \frac{f(u_n(x))}{u_n(x)}\le C_1\rme^{-C_2\abs{x}}
    \qquad\text{for } x\in\dR^N,\ n\in\dN. 
  \end{equation*}
  By \th\ref{thm:decay-at-infty}\ref{item:12} $(u_n)$ also remains
  bounded in $X_1$ and hence in $Y$.  Pick some
  $\alpha\in(1,\min\{2,q\})$.  By \ref{item:15} and \ref{item:16}, and
  passing to a subsequence, $(T^{-1}\cF(u_n))$ converges in $Y$.
  Since $u_n= T^{-1}\cF(u_n)$, $u_n\to v$ in $Y$, for some $v\in Y$,
  and $v=u$ since $Y\hookrightarrow H^1$ and $u_n\to u$ in $H^1$.
  Hence $u_n\to u$ in $Y$ for this subsequence, contradicting
  \eqref{eq:7} and thus finishing the proof of \ref{item:17}.
\end{proof}

If \ref{item:2} is satisfied then $J$, as defined in the introduction,
is well defined on $Y$.  The main result of this section is the
following
\begin{theorem}
  \th\label{thm:analyticity} Assume that \ref{item:1} and \ref{item:2}
  hold true.  
  \begin{enumerate}[label=\textup{(\alph*)}]
  \item \label{item:10} If \ref{item:4} is satisfied then $K$ is
    $Y$-locally path connected, and $J$ is $Y$-locally constant on $K$.
  \item \label{item:11} If \ref{item:3} is satisfied then $K_+$ is
    $Y$-locally path connected, and $J$ is $Y$-locally constant on $K_+$.
  \end{enumerate}
\end{theorem}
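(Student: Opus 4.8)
The strategy is to realize $K$ (resp.\ $K_+$) as the zero set of the Fredholm map $\Gamma(u) = u - T^{-1}\cF(u)$ on a neighborhood in the Banach space $Y$, show that $\Gamma$ is real analytic there, and then invoke the triangulation theorem for (finite dimensional) real analytic sets to conclude local path connectedness; the local constancy of $J$ then follows by a short connectedness-of-levels argument. First I would fix the index: pick $\alpha \in (1, \min\{2,q\})$, which is possible by the definitions of $q$ in \eqref{eq:4} and of $2^*$. By \th\ref{lem:properties-y}\ref{item:15} the operator $T^{-1}\colon X_\alpha \to Y$ is well defined and continuous, and by \th\ref{lem:properties-y}\ref{item:16} the superposition map $\cF\colon Y \to X_\alpha$ is completely continuous and continuously differentiable; hence $\Gamma = \id - T^{-1}\cF \colon Y \to Y$ is a $C^1$ map which is a compact perturbation of the identity, so $\Gamma'(u) = \id - T^{-1}\cF_1(u)$ is Fredholm of index $0$ at every $u$. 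Since $Y \hookrightarrow H^1$ and $T^{-1}$ inverts $T$ in the $H^1$ sense, $\Gamma(u)=0$ is equivalent to $Tu = \cF(u)$, i.e.\ to $u$ being a weak solution of \eqref{eq:12}; together with $K \subseteq Y$ (established in the text via \eqref{eq:4} and \th\ref{cor:compare-solutions}) this gives $K = \Gamma^{-1}(0)$, and in case \ref{item:11}, using \th\ref{lem:properties-y}\ref{item:23} that $K_+$ lies in the interior of the positive cone, $K_+$ is an open-and-closed subset of $\Gamma^{-1}(0)$ once we restrict to that interior (where the odd extension of $f$ makes $\cF$ agree with the genuine nonlinearity).

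The central point is analyticity of $\Gamma$ near $K$ (resp.\ $K_+$). Since $T^{-1}\colon X_\alpha \to Y$ is bounded linear, hence analytic, it suffices to show $\cF\colon Y \to X_\alpha$ is analytic on a neighborhood of $K$ (resp.\ $K_+$ inside the positive cone). I would do this by exhibiting, at a fixed $u_0 \in K$, a locally convergent power series: writing $f(u_0(x)+h) = \sum_{k\ge 0} \frac{f^{(k)}(u_0(x))}{k!} h^k$ and using the growth bounds $\abs{f^{(k)}(s)} \le a_k \abs{s}^{\max\{0,2-k\}}$ from \ref{item:4} (resp.\ $\abs{f^{(k)}(s)}\le a_k\abs{s}^{p-k}$ from \ref{item:3}, valid on $(0,M]$ and where positivity of $u_0$ bounded below by a multiple of $\varphi$ keeps us in the region of validity), the $k$-th term defines a bounded symmetric $k$-linear map $Y^k \to X_\alpha$, $(h_1,\dots,h_k)\mapsto \frac{f^{(k)}(u_0)}{k!} h_1\cdots h_k$, with norm controlled by $\frac{a_k}{k!}\norm{\varphi^{\,r_k}}_\infty \norm{u_0}_{X_1}^{\,r_k'}$ for suitable exponents; because $\limsup_k \frac{a_k}{k!} < \infty$ and $\varphi \le b_0$, these norms are summable against $\rho^k$ for $\rho$ small, so the series converges in a ball of $Y$ and represents $\cF$. (In case \ref{item:3} the lower bound $u_0 \ge C_1\varphi$ from \th\ref{cor:compare-solutions} and the openness of the positive cone around $u_0$ guarantee $u_0 + h > 0$ for small $h$, so the power series of $f$ about $u_0(x)$ is legitimate and the odd-extension convention is irrelevant near $K_+$.) This is the step I expect to be the main obstacle: bookkeeping the weights $\varphi^\alpha$ so that each multilinear term lands in $X_\alpha$ with a summable norm bound uniformly for $u_0$ ranging over a compact piece of $K$, and handling the boundary case $N\ge 3$ where $\alpha < 2 = q$ is forced — but \th\ref{lem:properties-y}\ref{item:16} already packages exactly the estimates needed, so this is a matter of iterating them.

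With $\Gamma$ analytic and Fredholm of index $0$ on a neighborhood $U$ of a point $u_0 \in K$ (resp.\ $K_+$), I would apply the Lyapunov–Schmidt reduction: splitting $Y = \ker \Gamma'(u_0) \oplus Y_1$ and $Y = \im \Gamma'(u_0) \oplus Y_2$ with $\dim\ker\Gamma'(u_0) = \dim Y_2 = m < \infty$, the equation $\Gamma(u) = 0$ reduces, via the analytic implicit function theorem, to a finite system $g(\xi) = 0$ of analytic equations $g\colon V \subseteq \ker\Gamma'(u_0) \to Y_2$, so that $K \cap U$ is homeomorphic to the real analytic set $g^{-1}(0)$ in $\dR^m$. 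By the triangulation theorem for real analytic sets (\cite{MR0173265,MR0159346}) $g^{-1}(0)$ is locally homeomorphic to a finite simplicial complex, hence locally path connected; transporting back, $K$ (resp.\ $K_+$) is $Y$-locally path connected. Finally, for the local constancy of $J$: on $Y$ the functional $J$ is $C^1$ with $J'(u) = Tu - \cF(u)$ in the appropriate duality (using \ref{item:2} and $Y \hookrightarrow H^1$), so $J'$ vanishes identically on $K$ (resp.\ $K_+$); restricting $J$ to a path-connected neighborhood $W$ of $u_0$ inside $K$ and using that any two points of $W$ are joined by a path in $K$ along which $\frac{d}{dt}J = \langle J', \dot\gamma\rangle = 0$ (the path may be taken piecewise $C^1$ after refining the triangulation, or one argues that $J$ is continuous and locally constant on the connected pieces of $g^{-1}(0)$), we conclude $J$ is constant on $W$, i.e.\ $Y$-locally constant on $K$ (resp.\ $K_+$). $\hfill\square$
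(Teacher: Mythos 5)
Your proposal follows essentially the same route as the paper's proof: realize $K$ (resp.\ $K_+$) as the zero set of the Fredholm map $\Gamma=\id-T^{-1}\cF$ on $Y$, establish analyticity of $\cF$ near $K$ (resp.\ $K_+$) by expanding $f(u_0+h)$ in a power series whose multilinear terms lie in $\cL^k(X_1,X_\alpha)$ with summable norms (using the growth bounds of \ref{item:4}/\ref{item:3} and, in the second case, the two-sided comparison $C_1\varphi\le u_0\le C_2\varphi$), pass by Lyapunov--Schmidt to a finite-dimensional real analytic zero set, apply the triangulation theorem of \cite{MR0173265,MR0159346} to get local path connectedness by piecewise $C^1$ arcs, and note $J'=0$ along these arcs. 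The only slight imprecision is the displayed bound on the multilinear norms: in case \ref{item:3} with $k>q$ the controlling factor is $C_1^{q-k}$ coming from the \emph{lower} comparison constant rather than a power of $\norm{u_0}_{X_1}$, but you address this verbally, so the argument is sound.
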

\begin{proof}
  We prove the two statements in parallel.  Fix
  $\alpha\in(1,\min\{2,q\})$, where $q$ is taken from \eqref{eq:4}.
  For \ref{item:10} fix $u\in K$, and for \ref{item:11} fix $u\in
  K_+$.  Set $M\coloneqq\norm{u}_\infty$ and let the numbers $a_k$ be
  given by \ref{item:4} or \ref{item:3}, respectively.
  
  Denote by $\cL^k(X_1,X_\alpha)$ the Banach space of $k$-linear
  bounded maps from $X_1$ into $X_\alpha$, for $k\in\dN_0$ (for $k=0$
  we set $\cL^k(X_1,X_\alpha)\coloneqq X_\alpha$).  For $k=0$ and
  $k=1$ we already know that $f^{(k)}(u)$ generates an element of
  $\cL^k(X_1,X_\alpha)$ by multiplication by
  \th\ref{lem:properties-y}\ref{item:16}.  We claim that
  \begin{gather}
    \label{eq:11}
    \parbox{.8\linewidth}{$f^{k}(u)$ generates an element $A_k$ of
      $\cL^k(X_1,X_\alpha)$ by multiplication, for every $k\in\dN_0$,}\\
    \label{eq:26}
    r_1\coloneqq\xlr(){\limsup_{k\to\infty}\norm{A_k}_{\cL^k(X_1,X_\alpha)}^{1/k}}^{-1}
    >0,\\
    \shortintertext{and}
    \exists r_2\in(0,r_1]\ \forall h\in B_{r_2}X_1\ \forall x\in\dR^N\colon
    f(u(x)+h(x))=\sum_{k=0}^\infty\frac{f^{(k)}(u(x))}{k!}h(x)^k.\label{eq:8}
  \end{gather}

  To prove the claims in case \ref{item:10}, denote by $r_0$ the
  convergence radius of the power series
  $\sum_0^\infty\frac{a_k}{k!}z^k$.  Consider $k\in\dN$, $k\ge2$.
  Taking into account that $\alpha<2$ we obtain from \ref{item:4} that
  \begin{equation*}
    \xnorm{\frac{f^{(k)}(u)}{k!}h^k}_{X_\alpha}
    \le\frac{a_k}{k!} \sup_{x\in\dR^N}
    \xabs{\frac{h(x)^k}{\varphi(x)^\alpha}}
    \le \frac{a_k}{k!}\norm{\varphi}_\infty^{k-\alpha} \norm{h}_{X_1}^k.
  \end{equation*}
  Hence \eqref{eq:11} is true, with
  \begin{equation*}
    \norm{A_k}_{\cL^k(X_1,X_\alpha)}
    \le \frac{a_k}{k!}\norm{\varphi}_\infty^{k-\alpha}.
  \end{equation*}
  Again by \ref{item:4}, \eqref{eq:26} is satisfied, and
  \begin{equation*}
    r_2\coloneqq\frac{r_0}{\norm{\varphi}_\infty}\le r_1.
  \end{equation*}
  Suppose now that $h\in B_{r_2}X_1$ and $x\in\dR^N$.  Then
  $u(x)\in[-M,M]$ and hence by \ref{item:4}
  \begin{equation*}
    \xlr(){\limsup_{k\to\infty}\fracwithdelims\lvert\rvert{f^{(k)}(u(x))}{k!}}^{-1}
    \ge r_0.
  \end{equation*}
  Moreover, $\abs{h(x)}< r_2\varphi(x)\le r_0$.  Since $f$ is
  analytic, \eqref{eq:8} follows.

  To prove the claims in case \ref{item:11}, denote again by $r_0$ the
  convergence radius of the power series
  $\sum_0^\infty\frac{a_k}{k!}z^k$.  By
  \th\ref{cor:compare-solutions} there are $C_1,C_2>0$ such that
  \begin{equation}
    \label{eq:6}
    C_1\varphi\le u\le C_2\varphi. 
  \end{equation}
  Suppose first that $k\in\dN$, $2\le k\le p$.  Taking into account that
  $\alpha<q$ we obtain from \ref{item:3}
  \begin{equation*}
    \xnorm{\frac{f^{(k)}(u)}{k!}h^k}_{X_\alpha}
    \le\frac{a_k}{k!} \sup_{x\in\dR^N}
    \abs{u(x)}^{q-k}
    \xabs{\frac{h(x)^k}{\varphi(x)^\alpha}}
    \le \frac{a_k}{k!}\norm{\varphi^{q-\alpha}}_\infty C_2^{q-k}\norm{h}_{X_1}^k.
  \end{equation*}
  If $k>q$ then we find
  \begin{equation*}
    \xnorm{\frac{f^{(k)}(u)}{k!}h^k}_{X_\alpha}
    \le\frac{a_k}{k!} \sup_{x\in\dR^N}
    \abs{u(x)}^{q-k}
    \xabs{\frac{h(x)^k}{\varphi(x)^\alpha}}
    \le \frac{a_k}{k!}\norm{\varphi^{q-\alpha}}_\infty C_1^{q-k}\norm{h}_{X_1}^k.
  \end{equation*}
  Hence \eqref{eq:11} is true, with
  \begin{equation*}
    \norm{A_k}_{\cL^k(X_1,X_\alpha)}\le
    \frac{a_k}{k!}\norm{\varphi^{p-\alpha}}_\infty C_1^{q-k}
  \end{equation*}
  for $k>p$.  Again by \ref{item:3}, \eqref{eq:26} is satisfied, and
  \begin{equation*}
    r_2\coloneqq C_1\min\xlr\{\}{r_0,\frac12}\le r_1.
  \end{equation*}

  Suppose now that $h\in B_{r_2}X_1$ and $x\in\dR^N$.  Then
  $u(x)\in(0,M]$ and hence by \ref{item:3}
  \begin{equation*}
    \xlr(){\limsup_{k\to\infty}\fracwithdelims\lvert\rvert{f^{(k)}(u(x))}{k!}}^{-1}
    \ge r_0u(x).
  \end{equation*}
  Moreover, $\abs{h(x)}< r_2\varphi(x)\le C_1r_0\varphi(x)\le
  r_0u(x)$, by \eqref{eq:6}, and hence $u(x)+h(x)\ge
  (C_1-r_2)\varphi(x)>0$.  Since $f$ is analytic in $(0,\infty)$,
  \eqref{eq:8} follows.

  For any $h\in X_1$ such that $\norm{h}_{X_1}<r_2$ we obtain from
  \eqref{eq:26} and $r_2\le r_1$ that
  \begin{equation}
    \label{eq:16}
    \sum_{k=0}^\infty A_k[h^k]\qquad\text{converges in $X_\alpha$.}
  \end{equation}
  Note that $X_\alpha$ embeds continuously in $C_\rmb(\dR^N)$ and that
  therefore the evaluation $E_x$ at a point $x\in\dR^N$ is a bounded
  linear operator on $X_\alpha$.
  Hence for every $x\in\dR^N$
  \begin{align*}
    \cF(u+h)(x)
    &=f(u(x)+h(x))\\
    &=\sum_{k=0}^\infty\frac{f^{(k)}(u(x))}{k!}h(x)^k&&\text{by \eqref{eq:8}}\\
    &=\sum_{k=0}^\infty E_x\xlr[]{A_k[h^k]}&&\text{by \eqref{eq:11}}\\
    &=E_x\xlr[]{\sum_{k=0}^\infty A_k[h^k]}&&\text{by \eqref{eq:16} and }E_x\in\cL(X_\alpha,\dR)
  \end{align*}
  and therefore
  \begin{equation}
    \cF(u+h)=\sum_{k=0}^\infty A_k[h^k],
    \qquad \text{for all } h\in B_{r_2}X_1.
  \end{equation}
  By \cite[Theorem~6.2]{MR0062947} the map $\cF$ is analytic in
  $B_{r_2}X_1$.  Since $u\in K_{(+)}$ was arbitrary, $\cF\colon X_1\to
  X_\alpha$ is analytic in a neighborhood of $K_{(+)}$.  And since
  $Y\hookrightarrow X_1$ and bounded linear operators are analytic,
  also $\cF\colon Y\to X_\alpha$ is analytic in a neighborhood of
  $K_{(+)}$, c.f.~\cite[Theorem~7.3]{MR0313811}.

  From the results above we conclude that $\Gamma\colon Y\to Y$ is
  analytic in a neighborhood of $K_{(+)}$.  Moreover, by
  \th\ref{lem:properties-y}\ref{item:15} $\Gamma$ is continuously
  differentiable in $Y$, and by \th\ref{lem:properties-y}\ref{item:16}
  and \cite[Proposition~8.2]{MR787404}, for every $v\in Y$ the
  operator $\cF'(v)\in\cL(Y,X_\alpha)$ is compact.  Hence for every
  $v\in Y$ the operator $\Gamma'(v)$ is of the form identity minus
  compact and thus a Fredholm operator of index $0$.  In short, one
  calls the map $\Gamma$ a Fredholm map of index $0$.

  Recall that $K=\Gamma^{-1}(0)$.  By
  \th\ref{lem:properties-y}\ref{item:23} $K_+$ is the set of zeros of
  $\Gamma$ in an open neighborhood of $u$.  In any case, the implicit
  function theorem shows that there are an open neighborhood $U$ of
  $u$ in $Y$ and a $C^\infty$-manifold $M\subseteq Y$ of finite
  dimension $\dim\cN(\Gamma'(u))$ such that $K\cap U\subseteq M$.  In
  fact, by \cite[Theorem~7.5]{MR0313811} (see also Corollary~7.3
  \emph{loc.\ cit.}), $M$ is the graph of a analytic map defined on a
  neighborhood of $u$ in $u+\cN(\Gamma'(u))$.  Moreover, $K\cap U$ is
  the set of zeros of the restriction of the finite dimensional
  analytic map $P\Gamma$ to $M$.  Here $P\in\cL(Y)$ denotes the
  projection with kernel $\cR(\Gamma'(u))$ and range
  $\cN(\Gamma'(u))$.  Therefore, \cite[Theorem~2]{MR0173265} applies
  and yields a triangulation of $K\cap U$ by homeomorphic images of
  simplexes such that their interior is mapped analytically (see also
  \cite[Satz~4]{MR0159346}).  This implies that $K_{(+)}$ is locally
  path connected by piecewise continuously differentiable arcs.
  Similarly as in the proof of \th\ref{lem:properties-y} it can be
  shown that the map $Y\to\dR$, $u\mapsto \int F(u)$ is continuously
  differentiable.  Hence also $J$ is continuously differentiable in
  $Y$ and therefore locally constant on $K_{(+)}$.
\end{proof}

\section{Applications to Periodic Potentials}
\label{sec:appl-peri-potent}

Returning to our main motivation we consider the variational setting
in $H^1(\dR^N)$.  Assuming \ref{item:1}, \ref{item:2}, and
\ref{item:20} the functional $J$ is of class $C^1$ on $H^1(\dR^N)$,
and solutions of \eqref{eq:12} are in correspondence with critical
points of $J$.  Denoting $c_0\coloneqq\inf J(K)$ it is easy to see
that $c_0>0$ if $K\neq\varnothing$.

To inspect the behavior of $J$ on $K$ we will need the following
boundedness condition:
\begin{enumerate}[fcond]
\item \label{item:21} Every sequence $(u_n)\subseteq K$ such that
  $\limsup_{n\to\infty} J(u_n)<2c_0$ is bounded.
\end{enumerate}
It is satisfied, for example, under the classical
Ambrosetti-Rabinowitz condition.  Alternatively, one could use a set
of conditions as in \cite{MR2557725}.  

For our purpose we also consider the periodicity condition
\begin{enumerate}[vcond]
\item \label{item:18} $V$ is $1$-periodic in all coordinates.
\end{enumerate}
By concentration compactness arguments $c_0$ is achieved if
\ref{item:1}, \ref{item:18}, \ref{item:2}, \ref{item:20} and
\ref{item:21} hold true and if $K\neq\varnothing$.

The local path connectedness of the set of (positive) solutions of
\eqref{eq:12} when $f$ is analytic has a consequence on the possible
critical levels of $J$:
\begin{theorem}
  \th\label{thm:discrete-energy-levels} Assume \ref{item:1},
  \ref{item:18}, \ref{item:2}, \ref{item:20} and \ref{item:21}.
  \begin{enumerate}[label=\textup{(\alph*)}]
  \item \label{item:19} If \ref{item:4} is satisfied, then
    $J(K)$ has no accumulation point in $[c_0,2c_0)$.
  \item \label{item:22} If  \ref{item:3} is satisfied, then
    $J(K_+)$ has no accumulation point in $[c_0,2c_0)$.
  \end{enumerate}
\end{theorem}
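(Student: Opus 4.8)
The plan is to argue by contradiction, reducing the statement to a compactness phenomenon and then invoking the local path connectedness provided by Theorem~\ref{thm:analyticity}. Suppose $(c_n)$ is a sequence of values in $J(K)$ (resp.\ $J(K_+)$) converging to some $c^*\in[c_0,2c_0)$, with $c_n\neq c^*$ for all $n$; pick $u_n\in K$ (resp.\ $K_+$) with $J(u_n)=c_n$. Since $\limsup J(u_n)=c^*<2c_0$, hypothesis~\ref{item:21} shows $(u_n)$ is bounded in $H^1(\dR^N)$. The first main step is to upgrade this to convergence in $K$ with respect to the $H^1$-topology, after translating: by the periodicity~\ref{item:18} the functional $J$ and the problem~\eqref{eq:12} are invariant under integer translations, and a standard concentration-compactness / Lions-type vanishing dichotomy argument in the low-energy range $[c_0,2c_0)$ shows that, after passing to a subsequence and applying suitable integer translations $y_n\in\dZ^N$, the sequence $v_n\coloneqq u_n(\cdot-y_n)$ converges strongly in $H^1(\dR^N)$ to some $v^*\in K$ (resp.\ $K_+$); the point is that the energy being strictly below $2c_0$ rules out splitting into two nontrivial bumps, so no mass escapes to infinity. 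Note $J(v_n)=c_n$ and $J(v^*)=c^*$.

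The second step is to transfer this to the weighted space $Y$. By \th\ref{lem:properties-y}\ref{item:17} the $H^1$-topology and the $Y$-topology coincide on $K$, so $v_n\to v^*$ in $Y$ as well. Now apply Theorem~\ref{thm:analyticity}\ref{item:19} (resp.\ \ref{item:22}, via \ref{item:3}): $K$ (resp.\ $K_+$) is $Y$-locally path connected and $J$ is $Y$-locally constant on it. Hence there is a $Y$-neighborhood $U$ of $v^*$ such that $J$ is constant on $K\cap U$ (resp.\ $K_+\cap U$), equal to $J(v^*)=c^*$. But $v_n\in U$ for all large $n$, so $c_n=J(v_n)=c^*$ for all large $n$, contradicting $c_n\neq c^*$. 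This proves that $J(K)$ (resp.\ $J(K_+)$) has no accumulation point in $[c_0,2c_0)$.

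The cases \ref{item:19} and \ref{item:22} are handled in exactly the same way: in the positive case one works throughout with $K_+$ and uses that the $H^1$-limit of positive solutions is again positive (or zero, which is excluded since $J(v^*)=c^*\ge c_0>0$), together with the fact that the translated minimizing sequence stays in $K_+$; one also needs the ground state energy $c_0=\inf J(K)$ to be attained, which was noted just above the theorem under the standing hypotheses. I expect the main obstacle to be the first step — the concentration-compactness argument establishing strong $H^1$-convergence after translation. One must carefully exclude both vanishing (impossible since $c^*>0$ forces a nontrivial weak limit, as the nonlinearity controlled by \ref{item:20} prevents the $L^p$-norm from going to zero while the energy stays bounded below) and dichotomy (the decomposition would produce two solution-like pieces each of energy at least $c_0$, forcing $c^*\ge 2c_0$, contrary to assumption). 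This is classical in spirit but requires the Brezis--Lieb lemma and the splitting lemma for the functional $J$ in the periodic setting; once it is in place, the remaining steps are immediate consequences of the results already established in the paper.
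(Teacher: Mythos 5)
Your proposal is correct and follows essentially the same route as the paper: contradiction, boundedness from \ref{item:21}, the splitting lemma (concentration compactness) to extract a translated subsequence $v_n\to v$ in $H^1$ with $v\in K$, transfer to the $Y$-topology via \th\ref{lem:properties-y}\ref{item:17}, and then the $Y$-local constancy of $J$ on $K$ (resp.\ $K_+$) from \th\ref{thm:analyticity}. The paper cites the splitting lemma [Proposition~2.5] of \cite{MR2151860} for the compactness-after-translation step that you sketch via Lions-type dichotomy; the rest of your argument matches the paper's proof line by line.
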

\begin{proof} 
  We only prove \ref{item:19} since the other claim is proved
  analogously.  Assume by contradiction that $J(K)$ contains an
  accumulation point $c\in[c_0,2c_0)$.  We work entirely in the
  $H^1$-topology, which coincides with the $Y$-topology on $K$ by
  \th\ref{lem:properties-y}\ref{item:17}.  There is a sequence
  $(u_n)\subseteq K$ such that $J(u_n)\neq c$ and $J(u_n)\to c$.  A
  standard argument using the splitting lemma
  \cite[Proposition~2.5]{MR2151860} yields, after passing to a
  subsequence, a translated sequence $(v_n)\subseteq K$ and $v\in K$
  such that $v_n\to v$, $J(v_n)=J(u_n)\neq c$ and $J(v)=c$.  Since $J$
  is locally constant on $K$ by \th\ref{thm:analyticity}\ref{item:10}
  we reach a contradiction.
\end{proof}

We now combine this property with the separation property obtained
in \cite{MR2488693} to show the existence of compact isolated sets
of solutions.  For any $c\in\dR$ denote
\begin{equation*}
  K_+^c\coloneqq\{u\in K_+\mid J(u)\le c\}.
\end{equation*}
The result reads:
\begin{corollary}
  \th\label{cor:isolated-set} In the situation of
  \th\ref{thm:discrete-energy-levels}\ref{item:22}, assume in
  addition that $V$ is of class $C^{1,1}$, that $V$ is even in every
  coordinate $x^i$, and that there is $\theta>2$ such that
  \begin{equation*}
    f'(u)u^2\ge(\theta-1)f(u)u\qquad\text{for }u\in\dR\ssm\{0\}.
  \end{equation*}
  Suppose that for every $u\in K_+^{c_0}$ that is even in $x^i$ for some
  $i\in\{1,2,\dots,N\}$ it holds true that
  \begin{equation*}
    \int_{\dR^N}u^2\partial_i^2 V\le0.
  \end{equation*}
  (Here we use the weak second derivative of $V$.  It exists because
  $V'$ is Lipschitz continuous.)  Then $K^+\neq\varnothing$ and there
  exists a compact subset $\Lambda$ of $K_+^{c_0}$ that is isolated in
  $K$, i.e., that satisfies $\dist(\Lambda,K\ssm\Lambda)>0$ in the
  $H^1$-metric.
\end{corollary}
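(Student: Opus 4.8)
The plan is to combine the discreteness of low critical levels from
\th\ref{thm:discrete-energy-levels}\ref{item:22} with the separation
theorem of \cite{MR2488693}, which under the stated symmetry,
regularity and Ambrosetti--Rabinowitz-type hypotheses produces, for
the periodic problem, a gap between the ground-state symmetric
solutions and the remaining solutions. First I would record that the
standing hypotheses of \th\ref{thm:discrete-energy-levels}\ref{item:22}
are in force, so $c_0$ is achieved (by the concentration-compactness
remark preceding the theorem), hence $K_+^{c_0}\neq\varnothing$ and in
particular $K\neq\varnothing$. Working in the $H^1$-topology, which by
\th\ref{lem:properties-y}\ref{item:17} agrees with the $Y$-topology on
$K$, I would then invoke \cite{MR2488693}: its hypotheses are exactly
the extra assumptions imposed here ($V\in C^{1,1}$, even in each $x^i$,
the superquadraticity inequality $f'(u)u^2\ge(\theta-1)f(u)u$, and the
sign condition $\int u^2\partial_i^2V\le 0$ on symmetric
ground-state solutions), so it yields a nonempty compact set $\Lambda_0$
of ground-state solutions — concretely, those that are even in every
coordinate, or at least a suitable symmetric subfamily — that is
\emph{separated} in the sense that
$\dist(\Lambda_0, K_+^{c_0}\ssm \Lambda_0)>0$ in the $H^1$-metric, after
factoring out the lattice $\dZ^N$-action by translations.

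The remaining point is to upgrade this ``separation within the ground
state level'' to ``isolated in all of $K$''. Here I would use
\th\ref{thm:discrete-energy-levels}\ref{item:22}: since
$c_0\in[c_0,2c_0)$ is not an accumulation point of $J(K_+)$, there is
$\varepsilon>0$ with $J(K_+)\cap(c_0,c_0+\varepsilon)=\varnothing$,
so any $H^1$-sequence in $K_+$ converging to a point of $\Lambda_0$
must eventually lie in $K_+^{c_0}$; combined with the separation of
$\Lambda_0$ inside $K_+^{c_0}$ from \cite{MR2488693}, this gives
$\dist(\Lambda_0, K_+\ssm\Lambda_0)>0$. To pass from $K_+$ to $K$ one
notes that positivity is an open condition in the relevant topology
(a solution near a positive one is positive, e.g.\ via
\th\ref{lem:properties-y}\ref{item:23} or the strong maximum
principle and local uniform convergence), so sign-changing solutions
stay a positive distance away, giving
$\dist(\Lambda_0, K\ssm\Lambda_0)>0$, i.e.\ $\Lambda_0$ is isolated
in $K$. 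Set $\Lambda:=\Lambda_0$.

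The main obstacle I anticipate is bookkeeping the group action: the
separation statement in \cite{MR2488693} and the boundedness
\ref{item:21} are naturally modulo translations, and $K$ itself is
$\dZ^N$-invariant, so one must be careful that the compact set
$\Lambda$ is taken \emph{after} the appropriate reduction (for
instance, intersecting with the fixed-point set of the reflection
symmetries, where the $\dZ^N$-action is trivial), and that ``isolated
in $K$'' is meant for that reduced set. Once that is set up correctly,
each step is a short deduction: achievement of $c_0$ (concentration
compactness), the quoted separation theorem, the level gap from
\th\ref{thm:discrete-energy-levels}\ref{item:22}, and openness of
positivity. I would therefore spend most of the write-up making the
invocation of \cite{MR2488693} precise and checking that its
hypotheses match ours verbatim, and only a line each on the gap
argument and the $K_+\rightsquigarrow K$ step.
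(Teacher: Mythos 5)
Your proposal follows essentially the same route as the paper: invoke the separation theorem of \cite{MR2488693} on $K_+^{c_0}$, use \th\ref{thm:discrete-energy-levels}\ref{item:22} to rule out solutions with energy slightly above $c_0$ accumulating at $\Lambda$, and use \th\ref{lem:properties-y}\ref{item:23} and \ref{item:17} to keep $K\ssm K_+$ at positive distance. The bookkeeping concern you raise about the $\dZ^N$-action is not actually an issue: \cite[Theorem~1.1]{MR2488693} delivers a concrete compact $\Lambda\subseteq K_+^{c_0}$ with $K_+^{c_0}=\dZ^N\star\Lambda$ and $\Lambda\cap(\dZ^N\ssm\{0\})\star\Lambda=\varnothing$, from which $\dist(\Lambda,K_+^{c_0}\ssm\Lambda)>0$ follows directly by compactness, with no quotienting needed.
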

\begin{proof}
  By \cite[Theorem~1.1]{MR2488693} there is a compact subset $\Lambda$ of
  $K_+^{c_0}$ such that
  \begin{equation*}
    K_+^{c_0}=\dZ^N\star\Lambda
    \qquad\text{and}\qquad
    \Lambda\cap\xlr(){\dZ^N\ssm\{0\}}\star\Lambda=\varnothing.
  \end{equation*}
  Here $\star$ denotes the action of $\dZ^N$ on functions on $\dR^N$
  by translation: $a\star u\coloneqq u(\cdot\,-a)$.  It follows
  easily that
  \begin{equation}
    \label{eq:17}
    \dist(\Lambda,K_+^{c_0}\ssm\Lambda)>0.
  \end{equation}

  We claim that $\dist(\Lambda,K\ssm\Lambda)>0$.  Recall that the
  topologies of the space $Y$ from Section~\ref{sec:real-analyticity}
  and the $H^1$-topology coincide on $K$ and that $K_+$ is contained
  in the interior of the positive cone of $Y$, by
  \th\ref{lem:properties-y}\ref{item:17} and \ref{item:23}.  Hence
  $\dist(\Lambda,K\ssm K_+)>0$.  It remains to show that
  $\dist(\Lambda,K_+\ssm\Lambda)>0$.  Assume by contradiction that
  this were not the case.  Since $\Lambda$ is compact there would
  exist a sequence $(u_n)\subseteq K_+\ssm\Lambda$ and $u\in\Lambda$
  such that $u_n\to u$.  Since $c_0$ is not an accumulation point of
  $J(K_+)$ by \th\ref{thm:discrete-energy-levels}\ref{item:22},
  $(u_n)\subseteq K_+^{c_0}$.  But this contradicts \eqref{eq:17},
  proving the claim.
\end{proof}

Note that in \cite{MR2488693} we show how to construct concrete
examples that satisfy the conditions of \th\ref{cor:isolated-set}.

\begin{acknowledgement}
  We would like to thank Jawad Snoussi for drawing our attention to
  the references \cite{MR0159346,MR0173265}.
\end{acknowledgement}
\bibliographystyle{amsplain-abbrv}
\bibliography{template}
\subsubsection*{Contact information:}
\begin{sloppypar}
  \begin{description}
  \item[Nils Ackermann:] Instituto de Matem\'{a}ticas, Universidad
    Nacional Aut\'{o}noma de M\'{e}xico, Circuito Exterior, C.U.,
    04510 M\'{e}xico D.F., Mexico
  \item[Norman Dancer:] School of Mathematics and Statistics,
    University of Sydney, NSW 2006, Australia
  \end{description}
\end{sloppypar}

\end{document}